\documentclass[11pt]{amsart}
\usepackage[T1]{fontenc}
\usepackage[utf8]{inputenc}
\usepackage{lmodern}
\usepackage{amsmath,amssymb,amsthm,mathtools}
\usepackage{microtype}
\usepackage{enumitem}
\usepackage{aliascnt}
\usepackage{hyperref}
\usepackage[nameinlink,capitalise,noabbrev]{cleveref}
\usepackage{geometry}
\hypersetup{colorlinks=true,linkcolor=blue,citecolor=blue,urlcolor=blue}

\newtheorem{theorem}{Theorem}[section]
\newaliascnt{proposition}{theorem}
\newtheorem{proposition}[proposition]{Proposition}
\aliascntresetthe{proposition}
\newaliascnt{lemma}{theorem}
\newtheorem{lemma}[lemma]{Lemma}
\aliascntresetthe{lemma}
\newaliascnt{corollary}{theorem}
\newtheorem{corollary}[corollary]{Corollary}
\aliascntresetthe{corollary}
\newaliascnt{remark}{theorem}
\newtheorem{remark}[remark]{Remark}
\aliascntresetthe{remark}
\newaliascnt{problem}{theorem}
\newtheorem{problem}[problem]{Problem}
\aliascntresetthe{problem}
\theoremstyle{definition}
\newaliascnt{definition}{theorem}
\newtheorem{definition}[definition]{Definition}
\aliascntresetthe{definition}

\newcommand{\cI}{\mathcal I}
\newcommand{\cP}{\mathcal P}
\newcommand{\cA}{\mathcal A}
\newcommand{\cS}{\mathcal S}

\newcommand{\1}{\mathbf 1}

\newcommand{\lcm}{\operatorname{lcm}}

\newcommand{\mult}{\operatorname{mult}}

\title[Weighted density bounds]{Weighted density bounds for covering systems over Dedekind domains}
\author{Jihui Chen}
\address{School of Mathematics, South China University of Technology, Guangzhou 510640, China}
\email{202420129926@mail.scut.edu.cn}
\date{October 2026}

\subjclass[2020]{11A07, 11R44, 05B40}
\keywords{covering systems, Dedekind domains, uncovered density, distortion method, Axiom A, arithmetical semigroups, prime ideals}

\begin{document}
\begin{abstract}
We study the density of uncovered sets arising from finite families of residue classes over Dedekind domains with finite residue norms.  Building on the distortion method of Balister, Bollob\'as, Morris, Sahasrabudhe and Tiba, we prove a general weighted density criterion in which the weight attached to a prime ideal is allowed to vary.  In the critical Mertens regime this yields, for every \(\varepsilon>0\), the admissible prime-ideal weight
\[
 1+\frac{(\log N\mathfrak p)^3(\log\log N\mathfrak p)^{1+\varepsilon}}{N\mathfrak p},
\]
up to a harmless modification at finitely many small prime ideals.  This is asymptotically smaller than the fixed weight \(1+(\log N\mathfrak p)^{3+\varepsilon}/N\mathfrak p\) appearing in the earlier density theorem.  The result is valid for systems of any fixed multiplicity and, for multiplicity one, requires the ideal moduli themselves to be distinct but not their norms.

We then place the analytic hypotheses in Knopfmacher's Axiom~A framework.  In the critical case of Axiom~A with exponent \(1\), the prime-ideal estimates needed by the weighted theorem are automatic.  More generally, under Axiom~A with exponent \(\rho>0\), we prove a weighted mean-value theorem on ideals; in the subcritical range \(0<\rho<1\), a first-moment argument shows that sufficiently large moduli cover an arbitrarily small proportion of the ambient space.  We also explain that the distortion construction itself only uses a compatible system of finite residue spaces with uniform fibres and a Chinese-remainder-type product structure, suggesting a possible extension beyond Dedekind domains.  Finally, we show that a direct passage from the second moment to higher integer moments does not lower the logarithmic exponent \(3\) within this moment hierarchy.
\end{abstract}

\maketitle

\section{Introduction}
A covering system is a finite family of residue classes whose union is the ambient arithmetic structure.  In the classical setting one considers arithmetic progressions
\[
 a_1+d_1\mathbb Z,\ldots,a_k+d_k\mathbb Z.
\]
The subject was initiated by Erd\H{o}s and has since led to a number of questions concerning the size, divisibility and distribution of the moduli.  We refer to \cite{BalisterSurvey2024} for a recent survey.

A density question of Erd\H{o}s and Graham asks for lower bounds on the density left uncovered by residue classes whose distinct moduli lie in a fixed multiplicative interval.  Filaseta, Ford, Konyagin, Pomerance and Yu proved a strong form of this statement in \cite{FFKPY2007}.  Hough's solution of the minimum modulus problem \cite{Hough2015} introduced a powerful probabilistic sieve, and Balister, Bollob\'as, Morris, Sahasrabudhe and Tiba subsequently developed a simpler distortion method which, among other applications, yields the following type of density bound: for sufficiently large distinct integer moduli, the uncovered density is bounded below in terms of
\[
 \sum_i \frac{\mu(d_i)}{d_i},
 \qquad
 \mu(p^r)=1+\frac{(\log p)^{3+\varepsilon}}{p}.
\]
See \cite[Theorem~1.1]{BBMST2022}.  The same paper points out that a bound depending only on such a weighted reciprocal sum fails when the prime-power weight is replaced by \(1+O(1/p)\), and explicitly raises the problem of characterising the weights for which a uniform density lower bound persists \cite[pp.~380--381]{BBMST2022}.

Covering systems have also been studied over algebraic number fields.  Kim extended density and reciprocal-sum results to rings of integers of number fields \cite{Kim2009}.  More recently Li, Wang and Yi developed the distortion method in the number-field setting \cite{LiWangYi2023}, while Li, Wang, Wang and Yi formulated the distortion method for Dedekind domains and applied it to global function fields \cite{LiWangWangYi2025}.  Thus the purpose of the present paper is not to claim the Dedekind-domain formulation of the distortion method itself as new.

Our main point is instead the choice of the distortion parameters.  We give a general sufficient condition on a variable prime-ideal weight.  This condition produces an explicit iterated-logarithmic improvement of the fixed weight in \cite{BBMST2022}, already when \(D=\mathbb Z\).  We formulate the result for a finite-norm Dedekind domain under the analytic prime-ideal estimates actually used in the proof.

Let \(D\) be a Dedekind domain such that \(D/I\) is finite for every nonzero ideal \(I\).  Write
\[
 N(I)=|D/I|
\]
for the norm, \(\cI(D)\) for the set of nonzero integral ideals and \(\cP(D)\) for the set of nonzero prime ideals.  We assume the Mertens-type upper bound
\begin{equation}\label{eq:MertensUpper}
 \sum_{N\mathfrak p\le x}\frac1{N\mathfrak p}
 \le \log\log x+C_D
\end{equation}
for all sufficiently large \(x\).  This holds, for example, for rings of integers of number fields by the Mertens theorem for number fields; see Rosen \cite{Rosen1999}.

For a positive function \(a:[2,\infty)\to(0,\infty)\), put
\begin{equation}\label{eq:AaDef}
 A_a(x)=\sum_{N\mathfrak p\le x}\frac{a(N\mathfrak p)}{N\mathfrak p},
\end{equation}
and define the multiplicative function \(\mu_a\) by
\begin{equation}\label{eq:muaDefIntro}
 \mu_a(\mathfrak p^r)=1+a(N\mathfrak p),\qquad r\ge1.
\end{equation}

The multiplicity of a finite family
\[
 \cA=\{a_\lambda+I_\lambda:\lambda\in\Lambda\}
\]
is
\[
 \mult(\cA)=\max_{I\in\cI(D)}\#\{\lambda:I_\lambda=I\}.
\]
Our principal result is the following.

\begin{theorem}[General weighted density criterion]\label{thm:mainWeighted}
Assume \eqref{eq:MertensUpper}.  Let \(a:[2,\infty)\to(0,\infty)\) satisfy \(a(x)\to0\), and suppose
\begin{equation}\label{eq:generalCriterion}
 \sum_{\mathfrak p\in\cP(D)}
 \frac{(\log N\mathfrak p)^3\exp\!\bigl(3A_a(N\mathfrak p)\bigr)}
 {(N\mathfrak p)^2a(N\mathfrak p)}<\infty.
\end{equation}
For every fixed integer \(s\ge1\) there exists \(M=M(D,a,s)\) such that the following holds.  If \(\cA=\{a_\lambda+I_\lambda\}_{\lambda\in\Lambda}\) is a finite family with \(\mult(\cA)\le s\) and \(N(I_\lambda)\ge M\) for every \(\lambda\), then its uncovered set
\[
 R=D\setminus\bigcup_{\lambda\in\Lambda}(a_\lambda+I_\lambda)
\]
has periodic density satisfying
\begin{equation}\label{eq:mainDensity}
 d(R)\ge \frac12\exp\left(-4\sum_{\lambda\in\Lambda}
 \frac{\mu_a(I_\lambda)}{N(I_\lambda)}\right).
\end{equation}
\end{theorem}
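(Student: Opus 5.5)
We will run the distortion method of \cite{BBMST2022}, in the Dedekind-domain form of \cite{LiWangWangYi2025}, and change only the choice of the distortion parameters.

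\textbf{Setup.} List the prime ideals occurring in the factorisations of the \(I_\lambda\) as \(\mathfrak p_1,\mathfrak p_2,\dots,\mathfrak p_n\), ordered by nondecreasing norm (ties broken arbitrarily). Put \(Q_j=\prod_{i\le j}\mathfrak p_i^{e_i}\), where \(e_i\) is the largest exponent of \(\mathfrak p_i\) occurring. By the Chinese remainder theorem, \(D/Q_j\) fibres over \(D/Q_{j-1}\) with fibres of uniform size \(N\mathfrak p_j^{e_j}\). Starting from the uniform measure \(\mathbb P_0\), we build measures \(\mathbb P_j\) on \(D/Q_j\). The measure \(\mathbb P_j\) is obtained from \(\mathbb P_{j-1}\) in the same way as in \cite{BBMST2022}. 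Let \(B_j\) be the union of those classes \(a_\lambda+I_\lambda\) whose modulus \(I_\lambda\) has \(\mathfrak p_j\) as its largest prime factor. Inside each fibre, remove the mass on \(B_j\) when that mass is at most \(\delta_j\) times the fibre mass. Otherwise, scale the mass uniformly. Here \(\delta_j\in(0,1/2]\) is a threshold chosen later.

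\textbf{Key steps.} We follow the argument of BBMST.

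(i) The final measure is supported on \(R\), and it satisfies
\[
 d(R)\ \ge\ \prod_j (1-\delta_j)\Bigl(1-\sum_j \eta_j\Bigr).
\]
The quantity \(\eta_j\) is controlled by the first moment \(M_1^{(j)}\), which is the \(\mathbb P_{j-1}\)-expected proportion of a fibre lying in \(B_j\), and by the second moment \(M_2^{(j)}\):
\[
 \eta_j\lesssim \frac{M_2^{(j)}}{\delta_j}.
\]

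(ii) A distortion lemma bounds the density of \(\mathbb P_{j-1}\) relative to the uniform measure on any class modulo \(I\) with \(\mathfrak p_j\nmid I\). The bound is \(\prod_{i<j,\ \mathfrak p_i\mid I}(1-\delta_i)^{-1}\). This yields
\[
 M_1^{(j)}\le\sum_{\lambda\in\Lambda_j}\frac{\prod(1-\delta_i)^{-1}}{N(I_\lambda)}.
\]

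(iii) The second moment counts pairs \(\lambda,\lambda'\in\Lambda_j\) of classes meeting within a fibre. Summing over \(\lcm\)'s gives a bound of the shape
\[
 M_2^{(j)}\ll s^2\,\frac{1}{N\mathfrak p_j^{2}}\prod_{N\mathfrak p\le N\mathfrak p_j}\Bigl(1+\frac{3(1-\delta)^{-2}}{N\mathfrak p}\Bigr)^{\!\!\ldots}.
\]
Two features enter here. The bounded multiplicity \(s\) costs only a constant factor, so distinct moduli are not needed, only the bound on multiplicity. The smoothness of the moduli supplies the factor \((\log N\mathfrak p_j)^3\), via the Mertens bound \eqref{eq:MertensUpper} applied to \(\sum 1/N\mathfrak p\).

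\textbf{Choice of parameters.} Take \(\delta_j\) to be a constant multiple of \(a(N\mathfrak p_j)\). Then \((1-\delta_i)^{-1}\le 1+2a(N\mathfrak p_i)\) for large \(N\mathfrak p_i\), so the product in step (ii) is \(\le\mu_{2a}(I)\). With a minor rescaling of constants we obtain
\[
 \sum_j M_1^{(j)}\le \sum_\lambda \frac{\mu_a(I_\lambda)}{N(I_\lambda)}.
\]
This sum appears in the exponent of \eqref{eq:mainDensity}, using \(\prod_j(1-\delta_j)\ge\exp(-2\sum_j\delta_j)\). The weighted products in step (iii) become
\[
 \exp\Bigl(3\sum_{N\mathfrak p\le N\mathfrak p_j}\frac{1+O(a)}{N\mathfrak p}\Bigr)\ll (\log N\mathfrak p_j)^3\exp\bigl(3A_a(N\mathfrak p_j)\bigr).
\]
Hence
\[
 \sum_j\frac{M_2^{(j)}}{\delta_j}\ll_s \sum_{\mathfrak p:\,N\mathfrak p\ge M}\frac{(\log N\mathfrak p)^3\exp\bigl(3A_a(N\mathfrak p)\bigr)}{(N\mathfrak p)^2a(N\mathfrak p)}.
\]
By \eqref{eq:generalCriterion} this is the tail of a convergent series, so it is \(\le1/2\) once \(M\) is large. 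That produces the factor \(\tfrac12\). The condition \(a\to0\) ensures \(\delta_j\le1/2\) and that the \(O(a)\) corrections are absorbed for \(N\mathfrak p\ge M\).

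\textbf{Main obstacle.} The hardest step is the second-moment bound (iii) with variable weights. Primes \(\mathfrak p_i\) of the same norm as \(\mathfrak p_j\) need care. So does the requirement that the weights \(\mu_a\) produced by the distortion match exactly the \(A_a\)-exponential in the criterion, rather than something like \(\exp(cA_a)\) with \(c>3\). I would first try to reproduce the BBMST computation verbatim, carrying \(\prod(1+a(N\mathfrak p))\) instead of \((\log p)^{3+\varepsilon}/p\) weights. I would tune the constant in \(\delta_j= c\,a(N\mathfrak p_j)\) so that the exponent comes out as \(3\) and the factor in \eqref{eq:mainDensity} comes out as \(4\). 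One constraint is that every prime below \(M\) must be handled by only the first-moment bound.
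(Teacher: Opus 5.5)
There is a genuine gap in how you obtain the density bound itself. Your step (i), $d(R)\ge\prod_j(1-\delta_j)\bigl(1-\sum_j\eta_j\bigr)$, rests on the worst-case pointwise comparison $P_n\le\prod_j(1-\delta_j)^{-1}P_0$.

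\textbf{Why (i) cannot give the theorem.} The loss $\exp(-2\sum_j\delta_j)$ sums the thresholds $\delta_j\asymp a(N\mathfrak p_j)$ over \emph{every} prime dividing $Q=\lcm_\lambda I_\lambda$. That quantity is not $\sum_jM_1^{(j)}$, and it is not bounded by $\sum_\lambda\mu_a(I_\lambda)/N(I_\lambda)$. Your sentence ``this sum appears in the exponent'' silently identifies the two.

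For a concrete failure, take the moduli $\mathfrak q^m\mathfrak p_j$ with $Y<N\mathfrak p_j\le x$ and a common residue $a$, where $\mathfrak q$ is a fixed small prime. Fix $x$ and take $m$ huge. Then $\sum_\lambda\mu_a(I_\lambda)/N(I_\lambda)$ is negligible, and indeed $d(R)\ge1-N(\mathfrak q)^{-m}$. Your bound, however, is at most $\exp\bigl(-c\sum_{Y<N\mathfrak p\le x}a(N\mathfrak p)\bigr)$, which tends to $0$ as $x\to\infty$. This already happens for $D=\mathbb Z$ with the weight of \cref{cor:explicitWeight}, since there $\sum_pa(p)=\infty$. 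No sup-type bound can work, because the distortion is large only on a set of tiny measure, here the class $a+\mathfrak q^m$.

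\textbf{The missing idea: average the distortion.} Put $\Delta_n=\max\{0,\log(P_n/P_0)\}$. The multiplier $g_j$ at step $j$ satisfies $\log g_j\le2\alpha_j$, so $\Delta_n\le2\sum_j\alpha_j$ on the support of $P_n$. Since $\alpha_j$ is $Q_{j-1}$-measurable, your step (ii) gives
\[
 E_n[\alpha_j]=E_{j-1}[\alpha_j]=P_{j-1}(B_j)\le\sum_{\lambda\in\Lambda_j}\frac{\nu(I_\lambda)}{N(I_\lambda)}.
\]
Jensen's inequality, conditioned on $R$, then gives
\[
 d(R)=P_0(R)\ge E_n\bigl[\1_Re^{-\Delta_n}\bigr]\ge P_n(R)\exp\bigl(-E_n[\Delta_n]/P_n(R)\bigr),
\]
with $P_n(R)\ge1-\eta_*$. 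Note that $P_n$ is not supported on $R$, contrary to what you state in (i). With $\eta_*\le\frac12$ this yields exactly $\frac12\exp\bigl(-4\sum_\lambda\nu(I_\lambda)/N(I_\lambda)\bigr)$. The constants $\frac12$ and $4$ come from this step, not from $\prod_j(1-\delta_j)$.

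\textbf{Two further repairs.}

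First, the tail of \eqref{eq:generalCriterion} controls only stages with large $q_j$. The moduli may still have small prime factors, since the hypothesis bounds $N(I_\lambda)$, not the primes. Those stages need $\delta_j=0$ and the sharp first moment, whose total is at most $s\sum_{I\in\cS_Y,\,N(I)\ge M}1/N(I)$. This is small only if the prime threshold $Y$ is fixed first, so that the criterion tail beyond $Y$ is small and $a\le1$ there, and $M$ is chosen afterwards depending on $Y$. With your single threshold (criterion tail over $N\mathfrak p\ge M$, first moment below $M$), this smooth sum is $\asymp\log M$ for $D=\mathbb Z$ and never becomes small.

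Second, in (iii) the factor $(1-\delta_j)^{-1}$ enters only once per prime of $\lcm(H_1,H_2)$, through the class bound (ii). Choose $\delta_j$ so that $(1-\delta_j)^{-1}\le1+a(q_j)$ and $1/(4\delta_j(1-\delta_j))\le1/a(q_j)$, for example $\delta_j=a/(1+a)$. Then the local factors are at most $1+3/q_j+3a(q_j)/q_j+O(q_j^{-2})$. This gives exactly $(\log q_i)^3e^{3A_a(q_i)}$ and the weight $\mu_a$ itself. A squared factor, as in your display, would instead produce $e^{6A_a}$, which the criterion does not control when $A_a$ is unbounded.
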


When \(s=1\), the ideal moduli are pairwise distinct.  Notice that this does \emph{not} require the numbers \(N(I_\lambda)\) to be distinct: different ideals of a Dedekind domain may have the same norm.

A convenient special case occurs when
\begin{equation}\label{eq:simpleA}
 \sum_{\mathfrak p}\frac{a(N\mathfrak p)}{N\mathfrak p}<\infty.
\end{equation}
Then \(A_a(x)=O(1)\), and the criterion in \cref{thm:mainWeighted} reduces to
\begin{equation}\label{eq:simpleB}
 \sum_{\mathfrak p}
 \frac{(\log N\mathfrak p)^3}
 {(N\mathfrak p)^2a(N\mathfrak p)}<\infty.
\end{equation}

For an explicit corollary, suppose also that
\begin{equation}\label{eq:primeIdealUpper}
 \pi_D(x):=\#\{\mathfrak p:N\mathfrak p\le x\}
 \ll_D \frac{x}{\log x}.
\end{equation}
Set \(\ell_2(x)=\max\{1,\log\log x\}\).

\begin{corollary}[An explicit iterated-logarithmic weight]\label{cor:explicitWeight}
Assume \eqref{eq:MertensUpper} and \eqref{eq:primeIdealUpper}.  For every \(\varepsilon>0\) and fixed \(s\ge1\), \cref{thm:mainWeighted} holds with
\begin{equation}\label{eq:explicitWeight}
 \mu_\varepsilon(\mathfrak p^r)
 =1+\frac{(\log N\mathfrak p)^3\ell_2(N\mathfrak p)^{1+\varepsilon}}
 {N\mathfrak p}.
\end{equation}
In particular, for \(D=\mathbb Z\) this prime-power weight is asymptotically smaller than
\(1+(\log p)^{3+\varepsilon}/p\).
\end{corollary}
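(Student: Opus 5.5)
The plan is to apply \cref{thm:mainWeighted} with the choice
\[
 a(x)=\frac{(\log x)^3\,\ell_2(x)^{1+\varepsilon}}{x},\qquad x\ge2,
\]
so that \(\mu_a=\mu_\varepsilon\) by \eqref{eq:muaDefIntro}. This \(a\) is positive on \([2,\infty)\), since \(\log x\ge\log2>0\) and \(\ell_2\ge1\), and clearly \(a(x)\to0\). It therefore suffices to verify the criterion \eqref{eq:generalCriterion}. I would do this through the simplified version: first establish \eqref{eq:simpleA}, which makes \(A_a\) bounded and so turns the factor \(\exp(3A_a)\) into a constant, and then verify \eqref{eq:simpleB}.

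\textbf{Step 1: \eqref{eq:simpleA}.} Write
\[
 \sum_{\mathfrak p}\frac{a(N\mathfrak p)}{N\mathfrak p}=\sum_{\mathfrak p} g(N\mathfrak p),\qquad g(t)=\frac{(\log t)^3\ell_2(t)^{1+\varepsilon}}{t^2}.
\]
The function \(g\) is eventually decreasing, and finitely many small prime ideals contribute only a constant. For the tail I would use Stieltjes integration against \(\pi_D\). Since \(-g'\ge0\) eventually, an upper bound for \(\pi_D\) yields an upper bound for the sum:
\[
 \sum_{T<N\mathfrak p\le x} g(N\mathfrak p)\le g(x)\pi_D(x)+\int_T^x \pi_D(t)\,(-g'(t))\,dt .
\]
Now apply \eqref{eq:primeIdealUpper}. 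The boundary term is \(\ll (\log x)^2\ell_2(x)^{1+\varepsilon}/x\to0\). The integrand is \(\ll (\log t)^2\ell_2(t)^{1+\varepsilon}/t^2\), which is integrable. Hence \(A_a(x)=O(1)\).

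\textbf{Step 2: \eqref{eq:simpleB}.} With this \(a\), the summand in \eqref{eq:simpleB} simplifies exactly to
\[
 \frac{(\log N\mathfrak p)^3}{(N\mathfrak p)^2 a(N\mathfrak p)}=\frac{1}{N\mathfrak p\,\ell_2(N\mathfrak p)^{1+\varepsilon}}.
\]
This is the step where \(\varepsilon>0\) is genuinely needed, and I expect it to be the main (if mild) obstacle, because \eqref{eq:MertensUpper} gives only an upper bound. Let \(S(t)=\sum_{N\mathfrak p\le t}1/N\mathfrak p\) and \(f(t)=(\log\log t)^{-1-\varepsilon}\); for \(t\ge e^e\) we have \(\ell_2(t)=\log\log t\), and \(f\) is decreasing there. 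Partial summation then gives
\[
 \sum_{T<N\mathfrak p\le x}\frac{f(N\mathfrak p)}{N\mathfrak p}\le f(x)S(x)+\int_T^x S(t)\,(-f'(t))\,dt .
\]
The boundary term is \(\ll(\log\log x)^{-\varepsilon}\to0\). The integrand is at most \((\log\log t+C_D)(1+\varepsilon)(\log\log t)^{-2-\varepsilon}/(t\log t)\). Substituting \(u=\log\log t\), the integral becomes \(\ll\int u^{-1-\varepsilon}\,du<\infty\). The finitely many prime ideals with \(N\mathfrak p<T\) contribute a finite amount. So \eqref{eq:simpleB} holds, hence \eqref{eq:generalCriterion} holds, and \cref{thm:mainWeighted} applies for every fixed \(s\).

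\textbf{Step 3: the comparison for \(D=\mathbb Z\).} This is immediate:
\[
 \frac{(\log p)^3(\log\log p)^{1+\varepsilon}/p}{(\log p)^{3+\varepsilon}/p}=\frac{(\log\log p)^{1+\varepsilon}}{(\log p)^{\varepsilon}}\to0 .
\]
Note that for \(\mathbb Z\) both hypotheses \eqref{eq:MertensUpper} and \eqref{eq:primeIdealUpper} are classical.
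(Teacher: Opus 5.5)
Your proof is correct and shares the paper's skeleton: the same choice of $a$, the reduction via \cref{cor:simpleCriterion} to \eqref{eq:simpleA} and \eqref{eq:simpleB}, and the same exact simplification of the summand in \eqref{eq:simpleB} to $1/(N\mathfrak p\,\ell_2(N\mathfrak p)^{1+\varepsilon})$. Where you differ is in how summability is checked. The paper cuts the prime ideals into blocks $e^t<N\mathfrak p\le e^{t+1}$, counts each block by \eqref{eq:primeIdealUpper}, and bounds every summand by a crude blockwise maximum. This gives block contributions $O(e^{-t}t^2(\log t)^{1+\varepsilon})$ and $O(1/(t(\log t)^{1+\varepsilon}))$, which is short and needs no differentiability or monotonicity of the summands as a whole. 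Your Abel summation does need eventual monotonicity and $C^1$ regularity of $g$ and $f$, which is harmless for these explicit functions. In return, in Step 2 you integrate against $S(t)$ and so use only the Mertens upper bound \eqref{eq:MertensUpper}, never \eqref{eq:primeIdealUpper}. The same device also handles Step 1. Write $g(t)=h(t)/t$ with $h(t)=(\log t)^3\ell_2(t)^{1+\varepsilon}/t$, which is eventually decreasing. Partial summation against $S$ then leaves a boundary term $h(x)S(x)\to0$ and an integrand $\ll(\log t)^3(\log\log t)^{2+\varepsilon}/t^2$. So your route shows that \eqref{eq:primeIdealUpper} is actually superfluous for this corollary, just as the paper itself derives \eqref{eq:primeSquareSum} from \eqref{eq:MertensUpper} alone.
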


The proof of \cref{cor:explicitWeight} is included below and uses only elementary exponential blocks together with \eqref{eq:primeIdealUpper}; no regularity assumption on an auxiliary slowly varying function is needed.

The hypotheses \eqref{eq:MertensUpper} and \eqref{eq:primeIdealUpper} are deliberately stated separately from any global counting axiom: they are the analytic inputs actually used by the weighted distortion theorem.  A natural structural condition implying them is Knopfmacher's Axiom~A for the ideal semigroup.  We develop this second layer in \cref{sec:axiomA}.  In particular, if
\begin{equation}\label{eq:linearIdealCountIntro}
 N_D(x):=\#\{\mathfrak a\in\cI(D):N\mathfrak a\le x\}
 =\kappa_Dx+O(x^\eta),\qquad 0\le\eta<1,
\end{equation}
then the ideal semigroup satisfies Axiom~A with exponent \(1\).  The abstract prime-number and Mertens theory then supplies the prime-ideal estimates used above.

In the same Axiom~A framework we prove a weighted mean-value theorem.  The linear case is the form needed for the interval application.

\begin{theorem}[Weighted ideal mean value, critical case]\label{thm:meanValueIntro}
Assume \eqref{eq:linearIdealCountIntro}.  Let \(a_{\mathfrak p}\ge0\) satisfy
\[
 \sum_{\mathfrak p}\frac{a_{\mathfrak p}}{N\mathfrak p}<\infty,
\]
and define \(\mu_a(\mathfrak p^r)=1+a_{\mathfrak p}\).  Then
\[
 \sum_{N\mathfrak a\le x}\mu_a(\mathfrak a)
 =\kappa_DH_ax+o(x),
 \qquad
 H_a=\prod_{\mathfrak p}\left(1+\frac{a_{\mathfrak p}}{N\mathfrak p}\right).
\]
Consequently, for every fixed \(K>1\),
\[
 \sum_{n<N\mathfrak a\le Kn}\frac{\mu_a(\mathfrak a)}{N\mathfrak a}
 =\kappa_DH_a\log K+o(1).
\]
\end{theorem}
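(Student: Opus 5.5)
We prove \cref{thm:meanValueIntro} by writing \(\mu_a\) as a convolution of the constant function \(1\) with a multiplicative function \(g\) that is supported on a thin set and has an absolutely convergent Dirichlet series at \(s=1\). Define \(g\) by \(\mu_a=1*g\) on the ideal semigroup. Because \(\mu_a(\mathfrak p^r)=1+a_{\mathfrak p}\) for every \(r\ge1\), Möbius inversion at prime powers gives
\[
 g(\mathfrak p)=a_{\mathfrak p},\qquad g(\mathfrak p^r)=0\quad(r\ge2).
\]
Hence \(g\) is nonnegative and supported on squarefree ideals, and
\[
 \sum_{\mathfrak b}\frac{g(\mathfrak b)}{N\mathfrak b}
 =\prod_{\mathfrak p}\Bigl(1+\frac{a_{\mathfrak p}}{N\mathfrak p}\Bigr)=H_a<\infty .
\]
Convergence of the product follows from the hypothesis \(\sum a_{\mathfrak p}/N\mathfrak p<\infty\) together with \(\log(1+t)\le t\). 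Unique factorisation of ideals in \(D\) justifies both the Euler product and the convolution identity.

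Next we write
\[
 \sum_{N\mathfrak a\le x}\mu_a(\mathfrak a)=\sum_{N\mathfrak b\le x}g(\mathfrak b)\,N_D\!\bigl(x/N\mathfrak b\bigr).
\]
By \eqref{eq:linearIdealCountIntro} we have \(N_D(y)=\kappa_D y+E(y)\), where \(|E(y)|\ll 1+y^\eta\) for all \(y\ge1\). Substituting, the main term is
\[
 \kappa_D x\sum_{N\mathfrak b\le x}\frac{g(\mathfrak b)}{N\mathfrak b}=\kappa_D H_a x - \kappa_D x\sum_{N\mathfrak b>x}\frac{g(\mathfrak b)}{N\mathfrak b}.
\]
The last sum is \(o(1)\) because it is the tail of a convergent series, so the main term is \(\kappa_D H_a x+o(x)\).

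The step that needs care is the error term \(\sum_{N\mathfrak b\le x}g(\mathfrak b)\,|E(x/N\mathfrak b)|\), because \(g\) is only summable against \(1/N\mathfrak b\) and not better. Since \(x/N\mathfrak b\ge1\), we have \(|E(x/N\mathfrak b)|\ll (x/N\mathfrak b)^{\eta'}\) with \(\eta'=\max(\eta,0)<1\), where the constant term is absorbed because \(x/N\mathfrak b\ge1\). This gives
\[
 x\sum_{N\mathfrak b\le x}\frac{g(\mathfrak b)}{N\mathfrak b}\Bigl(\frac{N\mathfrak b}{x}\Bigr)^{1-\eta'}.
\]
Each summand is at most \(g(\mathfrak b)/N\mathfrak b\) and tends to \(0\) pointwise as \(x\to\infty\). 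Dominated convergence (equivalently, splitting at \(N\mathfrak b\le \sqrt x\) and using the tail for the rest) then shows the sum is \(o(1)\), so the error term is \(o(x)\). This proves the first asymptotic. No rate is claimed, which is consistent with the \(o(x)\) in the statement.

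For the consequence, set \(S(t)=\sum_{N\mathfrak a\le t}\mu_a(\mathfrak a)=\kappa_DH_at+o(t)\) and apply partial summation:
\[
 \sum_{n<N\mathfrak a\le Kn}\frac{\mu_a(\mathfrak a)}{N\mathfrak a}=\frac{S(Kn)}{Kn}-\frac{S(n)}{n}+\int_n^{Kn}\frac{S(t)}{t^2}\,dt.
\]
The two boundary terms are each \(\kappa_DH_a+o(1)\), so their difference is \(o(1)\). The integral equals \(\kappa_DH_a\log K+\int_n^{Kn}o(t)/t^2\,dt\), and the remaining integral is \(o(\log K)=o(1)\) because \(K\) is fixed. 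This gives \(\kappa_DH_a\log K+o(1)\).
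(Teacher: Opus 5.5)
Your proposal is correct and follows essentially the same route as the paper, which proves this statement as the case \(\rho=1\) of its general Axiom~A mean-value theorem: write \(\mu_a=\mathbf 1*h\) with \(h\) supported on squarefree ideals, insert the ideal-counting law into the inner count, dispose of the error \(x^\eta\sum h(\mathfrak d)/(N\mathfrak d)^\eta\) by dominated convergence, and finish with partial summation. Your explicit handling of the tail \(\sum_{N\mathfrak b>x}g(\mathfrak b)/N\mathfrak b\) and of the error term on a bounded range of \(y\) only spells out steps the paper leaves implicit.
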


In \cref{sec:axiomA} we prove the more general exponent-\(\rho\) form of this statement and show that \(0<\rho<1\) is subcritical for the covering problem: sufficiently large moduli then cover an arbitrarily small proportion of the ambient space.

The paper is organised as follows.  In \cref{sec:prelim} we set up periodic density, the ideal-theoretic notation, and the Axiom~A terminology.  In \cref{sec:distortion} we give the distortion framework, explicitly attributing the probabilistic mechanism to \cite{BBMST2022} and incorporating bounded multiplicity; we also record the structural features suggesting extensions beyond Dedekind domains.  In \cref{sec:weightedProof} we choose the distortion parameters and prove \cref{thm:mainWeighted}.  \Cref{sec:axiomA} derives consequences of Axiom~A, including the general weighted mean-value theorem, the interval application, and the subcritical case.  Finally, \cref{sec:higher} explains why direct higher integer moments do not improve the exponent \(3\) within the same moment hierarchy.

\section{Preliminaries}\label{sec:prelim}
For ideals \(I,J\subset D\), our divisibility convention is
\[
 I\mid J\quad\Longleftrightarrow\quad J\subseteq I.
\]
Thus
\[
 (I,J):=I+J
\]
is the greatest common divisor and
\[
 \lcm(I,J)=I\cap J.
\]
Every nonzero ideal has a unique factorisation into prime ideals, and the norm is multiplicative.

If \(S\subseteq D\) is periodic modulo a nonzero ideal \(Q\), define
\begin{equation}\label{eq:periodicDensity}
 d_Q(S)=\frac{|S\bmod Q|}{N(Q)}.
\end{equation}
If \(Q\mid Q'\) and \(S\) is already \(Q\)-periodic, then each class modulo \(Q\) has exactly \(N(Q')/N(Q)\) lifts modulo \(Q'\), so \(d_{Q'}(S)=d_Q(S)\).  We therefore write simply \(d(S)\).

\subsection{Axiom A and the abstract analytic input}\label{subsec:axiomAprelim}
If the set of integral ideals of bounded norm is finite, then the multiplicative monoid \(\cI(D)\), together with the norm \(N\), is a multiplicative arithmetical semigroup in the sense of Knopfmacher.  In particular, this finiteness is automatic under the counting hypothesis in the following definition.

\begin{definition}[Axiom A]\label{def:axiomA}
Let \(\rho>0\).  We say that the ideal semigroup of \(D\) satisfies \emph{Axiom A with exponent \(\rho\)} if there are constants \(\kappa_D>0\) and \(0\le\eta<\rho\) such that
\begin{equation}\label{eq:AxiomA}
 N_D(x):=\#\{\mathfrak a\in\cI(D):N\mathfrak a\le x\}
 =\kappa_Dx^\rho+O_D(x^\eta).
\end{equation}
\end{definition}

This is Knopfmacher's Axiom~A after the harmless normalisation of the norm described below; see \cite[Chs.~4 and~6]{Knopfmacher1975}.  In particular, the linear counting law \eqref{eq:linearIdealCountIntro} is Axiom~A with exponent \(\rho=1\).

\begin{proposition}[Abstract prime and Mertens input]\label{prop:AxiomAanalytic}
Assume that \(D\) satisfies Axiom~A with exponent \(\rho>0\).  Then
\begin{equation}\label{eq:abstractPNT}
 \pi_D(x)\sim \frac{x^\rho}{\rho\log x}
 \qquad (x\to\infty),
\end{equation}
and there is a constant \(C_{D,\rho}\) such that
\begin{equation}\label{eq:AxiomAMertensRho}
 \sum_{N\mathfrak p\le x}\frac1{(N\mathfrak p)^\rho}
 =\log\log x+C_{D,\rho}+O_D\!\left(\frac1{\log x}\right).
\end{equation}
In particular, if \(\rho=1\), then \eqref{eq:MertensUpper} and \eqref{eq:primeIdealUpper} hold.  If \(0<\rho<1\), then
\begin{equation}\label{eq:subcriticalPrimeSum}
 \sum_{\mathfrak p}\frac1{N\mathfrak p}<\infty.
\end{equation}
More precisely, in the subcritical range,
\begin{equation}\label{eq:subcriticalPrimeTail}
 \sum_{N\mathfrak p>X}\frac1{N\mathfrak p}
 \ll_{D,\rho}\frac{X^{\rho-1}}{\log X}.
\end{equation}
\end{proposition}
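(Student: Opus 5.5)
The plan is to reduce everything to Knopfmacher's abstract prime number theorem for Axiom~A semigroups and then derive the remaining claims by partial summation. First, normalise the norm: set $|\mathfrak a|:=(N\mathfrak a)^\rho$. Then \eqref{eq:AxiomA} becomes
\[
\#\{\mathfrak a:|\mathfrak a|\le y\}=\kappa_D y+O(y^{\eta/\rho}),
\qquad \eta/\rho<1,
\]
which is Axiom~A with exponent $1$ for the semigroup $(\cI(D),|\cdot|)$. For such semigroups Knopfmacher proves an abstract prime number theorem with error term; see \cite[Chs.~4 and~6]{Knopfmacher1975}. The proof there proceeds via the zeta function $\zeta_D(s)=\sum|\mathfrak a|^{-s}$, which is meromorphic in $\Re s>\eta/\rho$ with only a simple pole at $s=1$, together with a de la Vallée Poussin type zero-free region. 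It gives
\[
\#\{\mathfrak p:|\mathfrak p|\le y\}=\frac{y}{\log y}+O\!\left(\frac{y}{\log^2 y}\right).
\]
Substituting $y=x^\rho$ yields \eqref{eq:abstractPNT}, since $\log y=\rho\log x$. I will quote this theorem rather than reprove it. Establishing the zero-free region is the only genuinely deep step, and it is the main obstacle if one insists on a self-contained argument.

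Next, the Mertens estimate \eqref{eq:AxiomAMertensRho} follows by partial summation. Write $P(y)=\#\{\mathfrak p:|\mathfrak p|\le y\}=\mathrm{li}(y)+E(y)$ with $E(y)=O(y/\log^2y)$. Then
\[
\sum_{|\mathfrak p|\le y}|\mathfrak p|^{-1}
=\int_{2^-}^{y}t^{-1}\,dP(t)
=\log\log y+\mathrm{const}+\frac{E(y)}{y}+\int_y^\infty\frac{E(t)}{t^2}\,dt.
\]
The last two terms are $O(1/\log y)$, and the constant absorbs the convergent integral $\int_2^\infty E(t)t^{-2}\,dt$. Putting $y=x^\rho$ turns $\log\log y$ into $\log\log x+\log\rho$, which gives \eqref{eq:AxiomAMertensRho} with the error $O(1/\log x)$.

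For $\rho=1$ this gives \eqref{eq:MertensUpper} with $C_D=C_{D,1}+O(1)$, and \eqref{eq:abstractPNT} immediately gives \eqref{eq:primeIdealUpper}.

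For $0<\rho<1$, write $1/N\mathfrak p=|\mathfrak p|^{-1/\rho}$ and again use partial summation against $P$. Since $P(y)\ll y/\log y$,
\[
\sum_{|\mathfrak p|>Y}|\mathfrak p|^{-1/\rho}
\le \frac1\rho\int_Y^\infty P(t)\,t^{-1/\rho-1}\,dt
\ll\int_Y^\infty\frac{t^{-1/\rho}}{\log t}\,dt
\ll\frac{Y^{1-1/\rho}}{\log Y},
\]
where the last step uses $1/\rho>1$ and the fact that $1/\log t$ is decreasing. The boundary term at $Y$ is nonpositive. With $Y=X^\rho$ we get $Y^{1-1/\rho}=X^{\rho-1}$ and $\log Y=\rho\log X$, which proves \eqref{eq:subcriticalPrimeTail}. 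Finally, \eqref{eq:subcriticalPrimeSum} follows because the tail tends to $0$ and each bounded range $N\mathfrak p\le X$ contains only finitely many primes, by Axiom~A.
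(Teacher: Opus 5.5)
Your proposal is correct and follows essentially the paper's route: renormalise to $\|\mathfrak a\|_\rho=N(\mathfrak a)^\rho$, quote Knopfmacher's abstract prime number theorem in the linear case, substitute $y=x^\rho$, and bound the subcritical tail by partial summation. The paper does that last step in the original norm against $t^{-1}$ using $\pi_D(t)\ll t^\rho/\log t$, while you do it in the renormalised norm against $t^{-1/\rho}$; this is the same computation.

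The only substantive difference is the Mertens formula. You derive the $O(1/\log x)$ error by partial summation from a prime number theorem with error $O(y/\log^2 y)$, whereas the paper quotes Knopfmacher's generalized Mertens formula directly, alongside the bare asymptotic $\widetilde\pi_D(y)\sim y/\log y$. Your version is more self-contained, but it relies on the error-term form of the prime number theorem; the bare asymptotic would not even produce the constant $C_{D,\rho}$. One small correction: for $\rho<1$ the renormalised norms can lie below $2$, so the lower limit $2^-$ should be $1$, which only shifts the constant.
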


\begin{proof}
We first reduce the general exponent to the normalised exponent \(1\).  Define a new norm on the same free commutative monoid of ideals by
\[
 \|\mathfrak a\|_\rho:=N(\mathfrak a)^\rho.
\]
It is again multiplicative, and its counting function is
\[
 \widetilde N_D(y)
 =\#\{\mathfrak a:\|\mathfrak a\|_\rho\le y\}
 =N_D(y^{1/\rho})
 =\kappa_Dy+O_D\!\left(y^{\eta/\rho}\right),
\]
where \(\eta/\rho<1\).  Thus the renormalised arithmetical semigroup satisfies the usual linear form of Axiom~A.  Knopfmacher's abstract prime number theorem and generalized Mertens formula for this normalised setting give
\[
 \widetilde\pi_D(y)\sim\frac{y}{\log y}
\]
and
\[
 \sum_{\|\mathfrak p\|_\rho\le y}\frac1{\|\mathfrak p\|_\rho}
 =\log\log y+\widetilde C_D+O_D\!\left(\frac1{\log y}\right);
\]
see \cite[Ch.~6]{Knopfmacher1975} for the abstract prime number theorem and \cite{Knopfmacher1975} for the generalized Mertens formula under Axiom~A.

Taking \(y=x^\rho\) gives
\[
 \widetilde\pi_D(x^\rho)=\pi_D(x)
 \sim \frac{x^\rho}{\log(x^\rho)}
 =\frac{x^\rho}{\rho\log x},
\]
which is \eqref{eq:abstractPNT}.  Likewise,
\[
 \sum_{N\mathfrak p\le x}\frac1{(N\mathfrak p)^\rho}
 =\log\log(x^\rho)+\widetilde C_D
  +O_D\!\left(\frac1{\log x}\right).
\]
Since
\[
 \log\log(x^\rho)=\log\log x+\log\rho
\]
for all sufficiently large \(x\), the constant \(\log\rho\) may be absorbed into \(\widetilde C_D\), proving \eqref{eq:AxiomAMertensRho}.

Suppose now that \(\rho=1\).  Then \eqref{eq:AxiomAMertensRho} gives
\[
 \sum_{N\mathfrak p\le x}\frac1{N\mathfrak p}
 =\log\log x+C_{D,1}+O_D\!\left(\frac1{\log x}\right),
\]
so, after increasing the constant to absorb the finitely many small values of \(x\), we obtain \eqref{eq:MertensUpper}.  Also \eqref{eq:abstractPNT} implies
\[
 \pi_D(x)\ll_D\frac{x}{\log x},
\]
which is \eqref{eq:primeIdealUpper}.

Finally assume \(0<\rho<1\).  By \eqref{eq:abstractPNT}, there exist \(X_0\ge2\) and \(C_{D,\rho}>0\) such that
\begin{equation}\label{eq:subcriticalPiUpper}
 \pi_D(t)\le C_{D,\rho}\frac{t^\rho}{\log t}
 \qquad (t\ge X_0).
\end{equation}
For \(Y>X\ge X_0\), partial summation with \(t^{-1}\) gives
\[
 \sum_{X<N\mathfrak p\le Y}\frac1{N\mathfrak p}
 =\frac{\pi_D(Y)}Y-\frac{\pi_D(X)}X
   +\int_X^Y\frac{\pi_D(t)}{t^2}\,dt.
\]
Since \(\rho<1\),
\[
 \frac{\pi_D(Y)}Y
 \ll_{D,\rho}\frac{Y^{\rho-1}}{\log Y}\to0.
\]
Letting \(Y\to\infty\) and discarding the non-positive term \(-\pi_D(X)/X\),
\[
 \sum_{N\mathfrak p>X}\frac1{N\mathfrak p}
 \le \int_X^\infty\frac{\pi_D(t)}{t^2}\,dt
 \ll_{D,\rho}\int_X^\infty\frac{t^{\rho-2}}{\log t}\,dt.
\]
For \(t\ge X\), \((\log t)^{-1}\le(\log X)^{-1}\), hence
\[
 \int_X^\infty\frac{t^{\rho-2}}{\log t}\,dt
 \le \frac1{\log X}\int_X^\infty t^{\rho-2}\,dt
 =\frac{X^{\rho-1}}{(1-\rho)\log X}.
\]
This proves \eqref{eq:subcriticalPrimeTail}; letting \(X\to\infty\) proves \eqref{eq:subcriticalPrimeSum}.
\end{proof}

\begin{remark}\label{rem:numberFieldAxiomA}
For the ring of integers of a number field, the classical ideal-counting theorem gives Axiom~A with exponent \(1\).  Thus in that setting the analytic hypotheses used by \cref{thm:mainWeighted,cor:explicitWeight} are not additional assumptions but consequences of the standard ideal-counting theory.  More generally, Axiom~A provides a convenient structural sufficient condition for the analytic input while the weighted theorem itself remains stated under the weaker hypotheses actually used in its proof.
\end{remark}

Let \(\cA=\{a_\lambda+I_\lambda\}_{\lambda\in\Lambda}\) be a finite family and put
\[
 Q=\lcm_{\lambda\in\Lambda}I_\lambda.
\]
Write
\begin{equation}\label{eq:Qfactor}
 Q=\prod_{i=1}^{n}\mathfrak p_i^{v_i},
\end{equation}
where the prime ideals are ordered by nondecreasing norm; ties are broken arbitrarily.  Put
\[
 Q_0=D,\qquad Q_i=\prod_{j=1}^{i}\mathfrak p_j^{v_j}.
\]
Let
\[
 \Lambda_i=\{\lambda:I_\lambda\mid Q_i,\ I_\lambda\nmid Q_{i-1}\},
 \qquad
 B_i=\bigcup_{\lambda\in\Lambda_i}(a_\lambda+I_\lambda).
\]

\section{The distortion framework}\label{sec:distortion}
The construction in this section is the ideal-theoretic version of the distortion method of Balister--Bollob\'as--Morris--Sahasrabudhe--Tiba \cite{BBMST2022}.  The point of recording it is to make the later choice of variable weights precise and to track multiplicity.

For \(1\le i\le n\), let
\[
 \pi_i:D/Q_i\to D/Q_{i-1}
\]
be the natural projection and, for \(x\in D/Q_{i-1}\), let
\[
 F_i(x)=\pi_i^{-1}(x).
\]
Since \(Q_{i-1}\) and \(\mathfrak p_i^{v_i}\) are coprime, the Chinese remainder theorem gives
\[
 D/Q_i\cong D/Q_{i-1}\times D/\mathfrak p_i^{v_i},
\]
so
\begin{equation}\label{eq:fibresize}
 |F_i(x)|=N(\mathfrak p_i)^{v_i}.
\end{equation}

Let \(P_0\) be the probability measure on the one-point quotient \(D/Q_0\); after uniform extension to \(D/Q\), this is the uniform measure.  Suppose \(P_{i-1}\) has been defined on \(D/Q_{i-1}\).  Extend it uniformly to each fibre of \(D/Q_i\), writing
\[
 \widetilde P_{i-1}(y)=\frac{P_{i-1}(x)}{|F_i(x)|},\qquad y\in F_i(x).
\]
Define
\begin{equation}\label{eq:alphaDef}
 \alpha_i(x)=\frac{|F_i(x)\cap B_i|}{|F_i(x)|}.
\end{equation}
Fix parameters \(\theta_i\in[0,1/2]\).  If \(\alpha_i(x)\le\theta_i\), set
\begin{equation}\label{eq:PiCase1}
 P_i(y)=
 \begin{cases}
 0,& y\in B_i,\\[1mm]
 \widetilde P_{i-1}(y)/(1-\alpha_i(x)),&y\notin B_i.
 \end{cases}
\end{equation}
If \(\alpha_i(x)>\theta_i\), set
\begin{equation}\label{eq:PiCase2}
 P_i(y)=
 \begin{cases}
 \dfrac{\alpha_i(x)-\theta_i}{\alpha_i(x)(1-\theta_i)}\widetilde P_{i-1}(y),&y\in B_i,\\[3mm]
 \dfrac1{1-\theta_i}\widetilde P_{i-1}(y),&y\notin B_i.
 \end{cases}
\end{equation}
A direct computation on each fibre shows that its total mass is preserved.

\begin{lemma}\label{lem:measureBasic}
If \(S\) is \(Q_{i-1}\)-measurable, then \(P_i(S)=P_{i-1}(S)\).  For arbitrary \(S\subseteq D/Q\),
\[
 P_i(S)\le\frac1{1-\theta_i}P_{i-1}(S),
\]
and if \(S\subseteq B_i\), then \(P_i(S)\le P_{i-1}(S)\).
\end{lemma}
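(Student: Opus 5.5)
The plan is to work fibre by fibre. The first step is to clarify what \(P_i(S)\) means for a set \(S\subseteq D/Q\): each \(P_i\) is extended uniformly from \(D/Q_i\) to \(D/Q\), so every class modulo \(Q_i\) carries its mass spread evenly over its \(N(Q)/N(Q_i)\) lifts. It is also worth recording that \(B_i\) is \(Q_i\)-periodic, since every \(I_\lambda\) with \(\lambda\in\Lambda_i\) divides \(Q_i\). Consequently membership \(y\in B_i\) is well defined for \(y\in D/Q_i\), and it makes sense to compare \(P_i\) and \(\widetilde P_{i-1}\) as densities on \(D/Q_i\).

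The first claim is mass preservation on each fibre \(F_i(x)\); this is the "direct computation" the paper mentions, and I will write it out. The set \(F_i(x)\cap B_i\) has \(\widetilde P_{i-1}\)-mass \(\alpha_i(x)P_{i-1}(x)\), and its complement has mass \((1-\alpha_i(x))P_{i-1}(x)\).
\begin{itemize}
\item In case \eqref{eq:PiCase1}, the new total mass is \(0+(1-\alpha_i)P_{i-1}(x)/(1-\alpha_i)=P_{i-1}(x)\). Here \(\alpha_i\le\theta_i\le 1/2<1\), so there is no division by zero.
\item In case \eqref{eq:PiCase2}, the new total mass is
\[
\frac{\alpha_i-\theta_i}{1-\theta_i}P_{i-1}(x)+\frac{1-\alpha_i}{1-\theta_i}P_{i-1}(x)=P_{i-1}(x).
\]
\end{itemize}
Now let \(S\) be \(Q_{i-1}\)-measurable. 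Then \(S\) is a union of full fibres \(F_i(x)\), lifted to \(D/Q\). The uniform extension to \(D/Q\) commutes with taking unions of fibres, so \(P_i(S)=\sum_{x\in S}P_{i-1}(x)=P_{i-1}(S)\).

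For arbitrary \(S\), I will compare pointwise on \(D/Q_i\).
\begin{itemize}
\item Outside \(B_i\), the multiplier is \(1/(1-\alpha_i)\le 1/(1-\theta_i)\) in case 1 and exactly \(1/(1-\theta_i)\) in case 2.
\item On \(B_i\), the multiplier is \(0\) in case 1 and \((\alpha_i-\theta_i)/(\alpha_i(1-\theta_i))\) in case 2. The latter is at most \(1\), since \(\alpha_i-\theta_i\le\alpha_i-\alpha_i\theta_i\) is equivalent to \(\theta_i(\alpha_i-1)\le0\), which holds because \(\alpha_i\le1\).
\end{itemize}
So the multiplier is everywhere at most \(1/(1-\theta_i)\), and it is at most \(1\) on \(B_i\). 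The uniform lift from \(D/Q_i\) to \(D/Q\) is linear and positive. Summing the pointwise inequalities over the points of \(S\) (weighted by the proportion of lifts lying in \(S\)) therefore gives \(P_i(S)\le P_{i-1}(S)/(1-\theta_i)\), and \(P_i(S)\le P_{i-1}(S)\) when \(S\subseteq B_i\).

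The only real subtlety is the bookkeeping that makes \(P_{i-1}(S)\) and \(P_i(S)\) comparable for non-measurable \(S\subseteq D/Q\). This is handled by noting that both measures are uniform on the fibres of \(D/Q\to D/Q_i\), so the density ratio \(P_i/\widetilde P_{i-1}\) is a function on \(D/Q_i\), and every estimate reduces to bounds on that ratio.
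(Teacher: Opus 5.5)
Your proof is correct and takes the same approach as the paper: the first claim comes from summing the fibre identity \(\sum_{y\in F_i(x)}P_i(y)=P_{i-1}(x)\), and the two inequalities come from the pointwise multipliers in \eqref{eq:PiCase1}--\eqref{eq:PiCase2}. You also write out the checks the paper calls a direct computation, namely the mass computation in both cases, the bound \((\alpha_i-\theta_i)/(\alpha_i(1-\theta_i))\le1\), and the uniform-lift bookkeeping.
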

\begin{proof}
The first assertion follows by summing the fibre identity
\[
 \sum_{y\in F_i(x)}P_i(y)=P_{i-1}(x).
\]
The other two assertions follow pointwise from the multipliers in \eqref{eq:PiCase1}--\eqref{eq:PiCase2}.
\end{proof}

For a nonnegative function \(f\) on \(D/Q\), write \(E_i[f]\) for expectation with respect to the uniformly extended \(P_i\).  Set
\[
 M_i^{(1)}=E_{i-1}[\alpha_i],\qquad M_i^{(2)}=E_{i-1}[\alpha_i^2].
\]
Define the multiplicative function \(\nu\) on divisors of \(Q\) by
\begin{equation}\label{eq:nuDef}
 \nu(\mathfrak p_j^r)=\frac1{1-\theta_j},\qquad r\ge1.
\end{equation}
For each \(i\), put
\begin{equation}\label{eq:betaDef}
 \beta_i=
 \begin{cases}
 M_i^{(1)},&\theta_i=0,\\[1mm]
 \min\left\{M_i^{(1)},\dfrac{M_i^{(2)}}{4\theta_i(1-\theta_i)}\right\},&\theta_i>0,
 \end{cases}
 \qquad
 \eta_*=\sum_i\beta_i.
\end{equation}
This convention explicitly removes the zero-denominator issue at \(\theta_i=0\).

\begin{lemma}\label{lem:BiMoment}
For every \(i\), \(P_i(B_i)\le M_i^{(1)}\).  If \(\theta_i>0\), then
\[
 P_i(B_i)\le\frac{M_i^{(2)}}{4\theta_i(1-\theta_i)}.
\]
\end{lemma}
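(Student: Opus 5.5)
The plan is to compute \(P_i(B_i)\) fibre by fibre. Since \(B_i\) is a union of classes whose moduli divide \(Q_i\), we may regard \(B_i\) as a subset of \(D/Q_i\); then \(P_i(B_i)=\sum_{x\in D/Q_{i-1}}P_i(F_i(x)\cap B_i)\). On a fixed fibre \(F_i(x)\), the uniformly extended measure \(\widetilde P_{i-1}\) gives \(F_i(x)\cap B_i\) mass exactly \(\alpha_i(x)P_{i-1}(x)\), by \eqref{eq:alphaDef} and \eqref{eq:fibresize}. Next we read off from \eqref{eq:PiCase1}--\eqref{eq:PiCase2} the value \(P_i(F_i(x)\cap B_i)=g_i(\alpha_i(x))P_{i-1}(x)\), where
\[
 g_i(\alpha)=\begin{cases}0,&\alpha\le\theta_i,\\ \dfrac{\alpha-\theta_i}{1-\theta_i},&\alpha>\theta_i.\end{cases}
\]
(In the second case the multiplier \(\frac{\alpha-\theta_i}{\alpha(1-\theta_i)}\) is applied to a set of \(\widetilde P_{i-1}\)-mass \(\alpha P_{i-1}(x)\).) Summing over \(x\) gives
\[
 P_i(B_i)=\sum_x g_i(\alpha_i(x))P_{i-1}(x)=E_{i-1}[g_i(\alpha_i)],
\]
because \(\alpha_i\) is a function of \(x\in D/Q_{i-1}\), i.e.\ \(Q_{i-1}\)-measurable, so its expectation under the extended \(P_{i-1}\) is exactly this sum.

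Both bounds then reduce to pointwise inequalities for \(g_i\) on \([0,1]\). For the first, \(g_i(\alpha)\le\alpha\) always: this is trivial when \(\alpha\le\theta_i\), and when \(\alpha>\theta_i\) it is equivalent to \(\alpha-\theta_i\le\alpha-\alpha\theta_i\), i.e.\ \(\alpha\theta_i\le\theta_i\), which holds since \(\alpha\le1\). Hence \(P_i(B_i)\le E_{i-1}[\alpha_i]=M_i^{(1)}\). The same argument covers \(\theta_i=0\), where \(g_i(\alpha)=\alpha\).

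For the second bound, with \(\theta=\theta_i>0\), the target is \(g_i(\alpha)\le\alpha^2/(4\theta(1-\theta))\). The case \(\alpha\le\theta\) is trivial. For \(\alpha>\theta\), multiply through by \(1-\theta>0\); the claim becomes \(4\theta(\alpha-\theta)\le\alpha^2\), i.e.\ \((\alpha-2\theta)^2\ge0\). Taking expectations gives \(P_i(B_i)\le M_i^{(2)}/(4\theta_i(1-\theta_i))\).

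No step is a real obstacle; the only point needing care is the bookkeeping in the first step. There one must check that \(B_i\) is genuinely \(Q_i\)-periodic, which holds since \(I_\lambda\mid Q_i\) for every \(\lambda\in\Lambda_i\). One must also check that the "uniformly extended" expectation \(E_{i-1}\) of the \(Q_{i-1}\)-measurable function \(\alpha_i\) coincides with the weighted sum over \(D/Q_{i-1}\). Both facts follow from the uniform-fibre structure and the first assertion of \cref{lem:measureBasic}.
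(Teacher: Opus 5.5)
Your proof is correct and is essentially the paper's argument. The paper also reads off the fibrewise identity \(P_i(B_i)=\frac{1}{1-\theta_i}E_{i-1}[(\alpha_i-\theta_i)_+]\) from \eqref{eq:PiCase1}--\eqref{eq:PiCase2} and then applies \((u-d)_+\le u^2/(4d)\), which is your \((\alpha-2\theta_i)^2\ge0\); for the first bound it cites the pointwise fact that the multiplier on \(B_i\) is at most \(1\) (\cref{lem:measureBasic}) together with \(P_{i-1}(B_i)=E_{i-1}[\alpha_i]\), and your inequality \(g_i(\alpha)\le\alpha\) is that same fact summed over a fibre.
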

\begin{proof}
The first inequality follows from \cref{lem:measureBasic} and the identity \(P_{i-1}(B_i)=E_{i-1}[\alpha_i]\).  For the second, \eqref{eq:PiCase1}--\eqref{eq:PiCase2} give
\[
 P_i(B_i)=\frac1{1-\theta_i}E_{i-1}[(\alpha_i-\theta_i)_+].
\]
Use \((u-d)_+\le u^2/(4d)\) for \(u,d>0\).
\end{proof}

\begin{lemma}[Residue-class measure bound]\label{lem:classBound}
For \(0\le i\le n\), every \(I\mid Q\), and every class \(a+I\),
\begin{equation}\label{eq:classBound}
 P_i(a+I)\le\frac{\nu((I,Q_i))}{N(I)}.
\end{equation}
\end{lemma}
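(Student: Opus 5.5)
We argue by induction on \(i\). The case \(i=0\) is immediate: \(P_0\), uniformly extended to \(D/Q\), is the uniform measure, so \(P_0(a+I)=1/N(I)\), while \((I,Q_0)=I+D=D\) gives \(\nu(D)=1\).

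For the inductive step, fix \(i\ge1\) and a class \(a+I\) with \(I\mid Q\). Using unique factorisation, write \(I=I'\mathfrak p_i^{r}J\), where \(I'\) is supported on \(\mathfrak p_1,\dots,\mathfrak p_{i-1}\) and \(J\) on \(\mathfrak p_{i+1},\dots,\mathfrak p_n\). Then \((I,Q_i)=I'\mathfrak p_i^{r}\) and \((I,Q_{i-1})=I'\). By the Chinese remainder theorem the class \(a+I\) is the intersection of a \(Q_i\)-measurable set \(S=a+I'\mathfrak p_i^{r}\) with a condition modulo \(J\). This condition is independent of the coordinates modulo \(Q_i\), and every \(P_j\) with \(j\le i\) is extended uniformly in those coordinates. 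Hence
\[
 P_i(a+I)=\frac{P_i(a+I'\mathfrak p_i^{r})}{N(J)}.
\]
This reduces the problem to the case \(J=D\), which I treat in two cases.

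\textbf{Case \(r=0\).} Here \(S\) is \(Q_{i-1}\)-measurable, so \cref{lem:measureBasic} gives \(P_i(S)=P_{i-1}(S)\). The induction hypothesis then gives
\[
 P_{i-1}(S)\le \frac{\nu(I')}{N(I')},
\]
and \(\nu((I,Q_i))=\nu(I')\), so the bound holds.

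\textbf{Case \(r\ge1\).} By \cref{lem:measureBasic},
\[
 P_i(S)\le \frac{P_{i-1}(S)}{1-\theta_i}.
\]
Next compute \(P_{i-1}(S)\): the set \(S\) is the intersection of the \(Q_{i-1}\)-class \(a+I'\) with a condition modulo \(\mathfrak p_i^{r}\), and \(P_{i-1}\) is uniform on the fibres \(F_i\). Therefore
\[
 P_{i-1}(S)=\frac{P_{i-1}(a+I')}{N(\mathfrak p_i)^{r}}\le \frac{\nu(I')}{N(I')\,N(\mathfrak p_i)^{r}}
\]
by the induction hypothesis. Multiplying by \(1/(1-\theta_i)=\nu(\mathfrak p_i^{r})\) and using multiplicativity of \(\nu\) and of \(N\) yields \(\nu(I'\mathfrak p_i^{r})/N(I'\mathfrak p_i^{r})\), which is the claimed bound.

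The main point requiring care is the uniform-extension bookkeeping. One must check that the induction hypothesis can be applied to the class \(a+I'\) of \(D/Q_{i-1}\): this is the case \(I=I'\), whose gcd with \(Q_{i-1}\) is \(I'\) itself. One must also check that the factor \(1/N(\cdot)\) for coordinates outside \(Q_i\) (respectively outside \(Q_{i-1}\) when computing \(P_{i-1}(S)\)) is exact, not merely an inequality. Both facts follow from the fibre-uniform definition of \(\widetilde P_{i-1}\) and from the CRT product decomposition \eqref{eq:fibresize}.
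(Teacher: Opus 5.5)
Your proof is correct and follows essentially the same route as the paper. Both use the exact uniform-extension reduction $P_i(a+I)=P_i(a+(I,Q_i))/N(I_{>i})$ and then induct on $i$. Both split on whether $\mathfrak p_i\mid I$, using measure preservation (\cref{lem:measureBasic}) in one case and, in the other, the pointwise $(1-\theta_i)^{-1}$ bound together with fibre uniformity in the $\mathfrak p_i$-coordinate. The only cosmetic difference is that you carry out the CRT reduction inside each inductive step rather than once at the outset.
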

\begin{proof}
Write
\[
 I=I_{\le i}I_{>i},\qquad
 I_{\le i}=\prod_{j\le i}\mathfrak p_j^{v_{\mathfrak p_j}(I)},
 \quad
 I_{>i}=\prod_{j>i}\mathfrak p_j^{v_{\mathfrak p_j}(I)}.
\]
Then \(I_{\le i}=(I,Q_i)\), and the two factors are coprime.  Since \(P_i\) is extended uniformly to the prime coordinates with index greater than \(i\), the Chinese remainder theorem gives
\begin{equation}\label{eq:futureUniform}
 P_i(a+I)=\frac{P_i(a+I_{\le i})}{N(I_{>i})}.
\end{equation}
It is therefore enough to prove
\[
 P_i(a+I_{\le i})\le\frac{\nu(I_{\le i})}{N(I_{\le i})}.
\]
We induct on \(i\).  The case \(i=0\) is immediate.  If \(\mathfrak p_i\nmid I\), then \(I_{\le i}=I_{\le i-1}\), the class is \(Q_{i-1}\)-measurable, and \cref{lem:measureBasic} plus induction applies.  Otherwise write \(I_{\le i}=J\mathfrak p_i^e\) with \(J\mid Q_{i-1}\), \(e\ge1\).  By the pointwise distortion bound and the uniform extension of \(P_{i-1}\) in the \(\mathfrak p_i\)-coordinate,
\[
 P_i(a+J\mathfrak p_i^e)
 \le\frac1{1-\theta_i}\frac{P_{i-1}(a+J)}{N(\mathfrak p_i)^e}
 \le\frac{\nu(J)}{(1-\theta_i)N(J)N(\mathfrak p_i)^e}.
\]
This is the desired estimate, and \eqref{eq:futureUniform} completes the proof.
\end{proof}

Define the pointwise distortion on \(D/Q\) by
\[
 \Delta_i(x)=\max\left\{0,\log\frac{P_i(x)}{P_0(x)}\right\}.
\]

\begin{lemma}[Average distortion]\label{lem:averageDistortion}
For every \(i\),
\begin{equation}\label{eq:averageDistortion}
 E_i[\Delta_i]
 \le2\sum_{\substack{\lambda\in\Lambda\\ I_\lambda\mid Q_i}}
 \frac{\nu(I_\lambda)}{N(I_\lambda)}.
\end{equation}
\end{lemma}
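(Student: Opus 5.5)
The plan is to telescope the distortion along the filtration \(Q_0\mid Q_1\mid\cdots\mid Q_i\), bound each increment by twice the mass that \(P_{j-1}\) assigns to \(B_j\), and then control that mass with \cref{lem:classBound}. Since \(\Delta_0\equiv0\),
\[
 E_i[\Delta_i]=\sum_{j=1}^{i}\bigl(E_j[\Delta_j]-E_{j-1}[\Delta_{j-1}]\bigr).
\]
The function \(\Delta_{j-1}\) depends only on the class modulo \(Q_{j-1}\), because \(P_{j-1}\) is uniformly extended and \(P_0\) is uniform. By the first part of \cref{lem:measureBasic}, applied to level sets, this gives \(E_j[\Delta_{j-1}]=E_{j-1}[\Delta_{j-1}]\). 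Hence each increment equals \(E_j[\Delta_j-\Delta_{j-1}]\).

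Next I bound the increment pointwise. Since \(\max\{0,u+v\}\le\max\{0,u\}+\max\{0,v\}\) and \(P_j/P_0=(P_j/\widetilde P_{j-1})(\widetilde P_{j-1}/P_0)\), we have
\[
 \Delta_j-\Delta_{j-1}\le\max\{0,\log(P_j/\widetilde P_{j-1})\}.
\]
By \eqref{eq:PiCase1}--\eqref{eq:PiCase2}, the ratio \(P_j/\widetilde P_{j-1}\) is at most \(1\) on \(B_j\). Off \(B_j\), it equals \(1/(1-\alpha_j)\) in the first case and \(1/(1-\theta_j)\) in the second. So the increment is a sum over fibres \(F_j(x)\), and I compute the \(P_j\)-mass of \(F_j(x)\setminus B_j\) in each case.

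In the first case (\(\alpha_j\le\theta_j\le1/2\)), that mass is \(P_{j-1}(x)\). The fibre therefore contributes \(P_{j-1}(x)\log\frac1{1-\alpha_j}\le2\alpha_jP_{j-1}(x)\), using \(\log\frac1{1-t}\le\frac t{1-t}\le2t\) for \(t\le1/2\).

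In the second case (\(\alpha_j>\theta_j\)), that mass is \(P_{j-1}(x)(1-\alpha_j)/(1-\theta_j)\). The contribution is then
\[
 P_{j-1}(x)\frac{1-\alpha_j}{1-\theta_j}\log\frac1{1-\theta_j}
 \le P_{j-1}(x)\frac{\theta_j}{1-\theta_j}
 \le2\theta_jP_{j-1}(x)<2\alpha_jP_{j-1}(x).
\]
Summing over \(x\) gives
\[
 E_j[\Delta_j-\Delta_{j-1}]\le2E_{j-1}[\alpha_j]=2P_{j-1}(B_j).
\]
This case check is the only step with real content; the main care needed is to use the masses off \(B_j\) rather than a crude sup bound.

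Finally, a union bound and \cref{lem:classBound} at level \(j-1\) give
\[
 P_{j-1}(B_j)\le\sum_{\lambda\in\Lambda_j}P_{j-1}(a_\lambda+I_\lambda)
 \le\sum_{\lambda\in\Lambda_j}\frac{\nu((I_\lambda,Q_{j-1}))}{N(I_\lambda)}
 \le\sum_{\lambda\in\Lambda_j}\frac{\nu(I_\lambda)}{N(I_\lambda)}.
\]
The last step uses that \(\nu\) is multiplicative with all prime-power values \(\ge1\), and \((I_\lambda,Q_{j-1})\mid I_\lambda\). The sets \(\Lambda_1,\dots,\Lambda_i\) are disjoint, and their union is \(\{\lambda:I_\lambda\mid Q_i\}\), apart from any trivial modulus \(I_\lambda=D\). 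Such a modulus would make \(R\) empty, so it is excluded; alternatively it adds a nonnegative term to the right-hand side. Summing over \(j\le i\) yields \eqref{eq:averageDistortion}.
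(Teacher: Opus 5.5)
Your proof is correct and takes essentially the same route as the paper: bound each step's log-multiplier by \(2\alpha_j\), use preservation of \(Q_{j-1}\)-measurable quantities to reduce to \(2\sum_{j\le i}P_{j-1}(B_j)\), and finish with the union bound and \cref{lem:classBound} (your explicit handling of a trivial modulus \(I_\lambda=D\) is a small detail the paper glosses over). The only difference is cosmetic. The paper telescopes pointwise, \(\Delta_i\le\sum_{j\le i}(\log g_j)_+\le2\sum_{j\le i}\alpha_j\) on the support of \(P_i\), before integrating; the crude pointwise bound \(\log g_j\le2\alpha_j\) already holds in both cases, so your more careful computation of the mass off \(B_j\) is valid but not actually needed.
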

\begin{proof}
For a point of positive \(P_j\)-mass, let \(g_j\) denote the multiplicative factor by which the uniformly extended measure \(P_{j-1}\) is changed at step \(j\); on atoms of zero \(P_j\)-mass we may set \(g_j=1\), since they make no contribution to \(E_i[\Delta_i]\) for any \(i\ge j\).  From \eqref{eq:PiCase1}--\eqref{eq:PiCase2}, whenever \(g_j>1\),
\[
 \log g_j\le2\alpha_j.
\]
Indeed, if \(\alpha_j\le1/2\), then \(g_j\le(1-\alpha_j)^{-1}\) and \(-\log(1-\alpha_j)\le2\alpha_j\); if \(\alpha_j>1/2\), then \(g_j\le(1-\theta_j)^{-1}\le2\), so \(\log g_j\le\log2\le2\alpha_j\).  On the support of \(P_i\), the density ratio telescopes as \(P_i/P_0=\prod_{j\le i}g_j\), and therefore
\[
 \Delta_i\le\sum_{j\le i}(\log g_j)_+\le2\sum_{j\le i}\alpha_j.
\]
Taking \(P_i\)-expectations and using the repeated measure-preservation property for the \(Q_{j-1}\)-measurable function \(\alpha_j\),
\[
 E_i[\Delta_i]\le2\sum_{j\le i}E_i[\alpha_j]
 =2\sum_{j\le i}E_{j-1}[\alpha_j]
 =2\sum_{j\le i}P_{j-1}(B_j).
\]
For \(\lambda\in\Lambda_j\), write \(I_\lambda=H_\lambda\mathfrak p_j^{r_\lambda}\) with \(H_\lambda\mid Q_{j-1}\).  By the union bound and \cref{lem:classBound},
\[
 P_{j-1}(B_j)
 \le\sum_{\lambda\in\Lambda_j}\frac{\nu(H_\lambda)}{N(I_\lambda)}
 \le\sum_{\lambda\in\Lambda_j}\frac{\nu(I_\lambda)}{N(I_\lambda)}.
\]
Summing over \(j\) proves the claim.
\end{proof}

\begin{theorem}[Abstract distortion criterion]\label{thm:abstractDistortion}
If \(\eta_*<1\), then \(\cA\) does not cover \(D\), and
\begin{equation}\label{eq:abstractDensity}
 d(R)\ge(1-\eta_*)\exp\left(
 -\frac2{1-\eta_*}\sum_{\lambda\in\Lambda}
 \frac{\nu(I_\lambda)}{N(I_\lambda)}\right).
\end{equation}
In particular, if \(\eta_*\le1/2\), then
\[
 d(R)\ge\frac12\exp\left(-4\sum_\lambda
 \frac{\nu(I_\lambda)}{N(I_\lambda)}\right).
\]
\end{theorem}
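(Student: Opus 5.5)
We track the mass of the uncovered set under the final measure \(P_n\) and then compare \(P_n\) with the uniform measure \(P_0\) through the distortion estimate. The set \(D/Q\setminus R\) is \(\bigcup_i B_i\). We first show that the mass \(P_n\) gives to this union is at most \(\eta_*\). For each \(i\), the set \(B_i\) is determined by the coordinates up to \(i\), so it is \(Q_i\)-measurable. By repeated use of \cref{lem:measureBasic} (measure preservation for \(Q_{j-1}\)-measurable sets, \(j>i\)), we get \(P_n(B_i)=P_i(B_i)\). \Cref{lem:BiMoment} then gives \(P_i(B_i)\le\beta_i\) in both cases of \eqref{eq:betaDef}. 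A union bound yields \(P_n(D/Q\setminus R)\le\eta_*\), hence
\[
 P_n(R)\ge1-\eta_*.
\]
If \(\eta_*<1\), this already shows \(R\neq\emptyset\), so \(\cA\) does not cover \(D\).

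To pass from \(P_n(R)\) to \(d(R)=P_0(R)\), we use a Jensen-type change of measure. Let \(\nu_R\) be \(P_n\) conditioned on \(R\). On the support of \(P_n\) we have \(P_0/P_n=\exp(-\log(P_n/P_0))\ge\exp(-\Delta_n)\). Then
\[
 P_0(R)\ge\sum_{x\in R,\,P_n(x)>0}P_n(x)\,e^{-\Delta_n(x)}
 =P_n(R)\,\mathbb E_{\nu_R}\bigl[e^{-\Delta_n}\bigr]
 \ge P_n(R)\exp\bigl(-\mathbb E_{\nu_R}[\Delta_n]\bigr),
\]
by convexity of the exponential. Since \(\Delta_n\ge0\), we have \(\mathbb E_{\nu_R}[\Delta_n]\le E_n[\Delta_n]/P_n(R)\le E_n[\Delta_n]/(1-\eta_*)\). \Cref{lem:averageDistortion} with \(i=n\) bounds \(E_n[\Delta_n]\) by \(2\sum_\lambda\nu(I_\lambda)/N(I_\lambda)\), because every \(I_\lambda\) divides \(Q_n=Q\). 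Combining these gives \eqref{eq:abstractDensity}. The special case follows by substituting \(\eta_*\le1/2\): then \(1-\eta_*\ge1/2\) and \(2/(1-\eta_*)\le4\).

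The main point needing care is the Jensen step, because \(P_n\) may vanish at points where \(P_0\) does not. We handle this by summing only over the support of \(P_n\). This can only decrease the lower bound for \(P_0(R)\), and \(\Delta_n\) is well defined there. One also needs the identity \(P_n(B_i)=P_i(B_i)\), which requires \(B_i\) to be a union of classes modulo divisors of \(Q_i\); this holds by the definition of \(\Lambda_i\).
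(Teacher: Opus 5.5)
Your proposal is correct and is essentially the paper's own proof. You transport $P_n(B_i)=P_i(B_i)$ via \cref{lem:measureBasic}, bound each term by $\beta_i$ using \cref{lem:BiMoment}, and pass from $P_n(R)$ to $P_0(R)$ by Jensen's inequality for $P_n$ conditioned on $R$ together with \cref{lem:averageDistortion}; restricting to the support of $P_n$ just makes explicit what the paper's inequality $P_0(R)\ge E_n[\1_R e^{-\Delta_n}]$ does implicitly. The only caveat, shared with the paper, is the tacit assumption that no modulus $I_\lambda$ equals $D$: such a $\lambda$ lies in no $\Lambda_i$, so $D/Q\setminus R=\bigcup_i B_i$ fails and the stated bound is then false, though this is harmless in the applications where $N(I_\lambda)\ge M$.
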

\begin{proof}
Since \(B_i\) is \(Q_i\)-measurable, \cref{lem:measureBasic} gives \(P_n(B_i)=P_i(B_i)\).  Thus
\[
 1-P_n(R)\le\sum_iP_i(B_i)\le\eta_*,
\]
by \cref{lem:BiMoment}.  Therefore \(P_n(R)\ge1-\eta_*\).  Also, by definition of \(\Delta_n\),
\[
 P_0(R)\ge E_n[\1_R e^{-\Delta_n}].
\]
Conditioning on \(R\) and applying Jensen's inequality gives
\[
 E_n[\1_R e^{-\Delta_n}]
 \ge P_n(R)\exp\left(-\frac{E_n[\Delta_n]}{P_n(R)}\right).
\]
Now use \cref{lem:averageDistortion}.  Finally \(P_0(R)=d(R)\) because \(P_0\) is uniform on \(D/Q\).
\end{proof}

We next record the moment bounds.  Put \(q_i=N\mathfrak p_i\).  Every modulus exposed at stage \(i\) has the form
\[
 I_\lambda=H_\lambda\mathfrak p_i^{r_\lambda},
 \qquad H_\lambda\mid Q_{i-1},\quad 1\le r_\lambda\le v_i.
\]
For \(x\in D/Q_{i-1}\), the fibre \(F_i(x)\) has cardinality
\(q_i^{v_i}\).  To compute the contribution of a single new residue
class, use the coprime factorisation
\(I_\lambda=H_\lambda\mathfrak p_i^{r_\lambda}\).  By the Chinese
remainder theorem, the congruence
\(y\equiv a_\lambda\pmod{I_\lambda}\), with
\(y\in F_i(x)\), has no solution unless
\(x\equiv a_\lambda\pmod{H_\lambda}\).  If this latter congruence
holds, precisely \(q_i^{v_i-r_\lambda}\) of the \(q_i^{v_i}\)
points in the fibre satisfy the additional congruence
\(y\equiv a_\lambda\pmod{\mathfrak p_i^{r_\lambda}}\).  Hence
\[
 \frac{|F_i(x)\cap(a_\lambda+I_\lambda)|}{|F_i(x)|}
 =q_i^{-r_\lambda}\1_{\{x\equiv a_\lambda\pmod{H_\lambda}\}}.
\]
The sets \(F_i(x)\cap(a_\lambda+I_\lambda)\) need not be disjoint
as \(\lambda\) varies.  Since
\(B_i=\bigcup_{\lambda\in\Lambda_i}(a_\lambda+I_\lambda)\),
we apply the union bound to their cardinalities to obtain
\begin{equation}\label{eq:alphaUnion}
 \alpha_i(x)=\frac{|F_i(x)\cap B_i|}{|F_i(x)|}
 \le\sum_{\lambda\in\Lambda_i}
 q_i^{-r_\lambda}\1_{\{x\equiv a_\lambda\pmod{H_\lambda}\}}.
\end{equation}
This is the ideal-theoretic counterpart of the fibre-counting step
in the proof of \cite[Lemma~3.6]{BBMST2022}; the argument is recorded
here because it is the starting point for all of the moment bounds below.

\begin{lemma}[General moment bound]\label{lem:generalMoment}
If \(\mult(\cA)\le s\), then for every integer \(k\ge1\),
\begin{equation}\label{eq:generalMoment}
 E_{i-1}[\alpha_i^k]
 \le\frac{s^k}{(q_i-1)^k}
 \sum_{H_1,\ldots,H_k\mid Q_{i-1}}
 \frac{\nu(\lcm(H_1,\ldots,H_k))}
 {N(\lcm(H_1,\ldots,H_k))}.
\end{equation}
Moreover,
\begin{equation}\label{eq:sharpFirstMoment}
 M_i^{(1)}\le
 \sum_{\lambda\in\Lambda_i}q_i^{-r_\lambda}
 \frac{\nu(H_\lambda)}{N(H_\lambda)}.
\end{equation}
\end{lemma}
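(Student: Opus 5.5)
The plan is to start from the fibre-counting inequality \eqref{eq:alphaUnion}, expand its $k$-th power, and bound each term with the residue-class measure bound of \cref{lem:classBound}. Since $\alpha_i$ is a function of $x\in D/Q_{i-1}$, every expectation below is a $P_{i-1}$-measure of a $Q_{i-1}$-periodic set. Recall also that $H_\lambda\mid Q_{i-1}$ for every $\lambda\in\Lambda_i$.

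\emph{First moment.} Take $E_{i-1}$ in \eqref{eq:alphaUnion}. By linearity,
\[
 M_i^{(1)}\le\sum_{\lambda\in\Lambda_i}q_i^{-r_\lambda}P_{i-1}(a_\lambda+H_\lambda).
\]
Apply \cref{lem:classBound} at level $i-1$ with $I=H_\lambda$. Since $H_\lambda\mid Q_{i-1}$, we have $(H_\lambda,Q_{i-1})=H_\lambda$, so
\[
 P_{i-1}(a_\lambda+H_\lambda)\le\frac{\nu(H_\lambda)}{N(H_\lambda)}.
\]
This is exactly \eqref{eq:sharpFirstMoment}.

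\emph{$k$-th moment.} All terms in \eqref{eq:alphaUnion} are nonnegative, so raising it to the $k$-th power gives
\[
 \alpha_i(x)^k\le\sum_{\lambda_1,\ldots,\lambda_k\in\Lambda_i}\prod_{j=1}^k q_i^{-r_{\lambda_j}}\,\1\{x\equiv a_{\lambda_j}\ (\mathrm{mod}\ H_{\lambda_j})\ \forall j\}.
\]
For a fixed tuple, the set of $x$ satisfying all $k$ congruences is handled as follows.
\begin{itemize}[label=--]
\item By the Chinese remainder theorem for ideals, the set is either empty or a single residue class modulo $L=\lcm(H_{\lambda_1},\ldots,H_{\lambda_k})=\bigcap_j H_{\lambda_j}$.
\item Since $L\mid Q_{i-1}$, \cref{lem:classBound} bounds its $P_{i-1}$-measure by $\nu(L)/N(L)$ in either case.
\item We simply discard the compatibility condition among the residues; this only enlarges the bound.
\end{itemize}

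Next, regroup the tuples according to $(H_1,\ldots,H_k)=(H_{\lambda_1},\ldots,H_{\lambda_k})$, with each $H_j\mid Q_{i-1}$. The bound above depends only on the $H_j$, so the inner sum factorises as
\[
 \prod_{j=1}^k\sum_{\lambda\in\Lambda_i:\,H_\lambda=H_j}q_i^{-r_\lambda}.
\]
For fixed $H$ and $r$, every $\lambda$ with $H_\lambda=H$ and $r_\lambda=r$ has the same modulus $I_\lambda=H\mathfrak p_i^{r}$. Hence $\mult(\cA)\le s$ allows at most $s$ such $\lambda$, and each factor is at most
\[
 s\sum_{r\ge1}q_i^{-r}=\frac{s}{q_i-1}.
\]
Combining these gives \eqref{eq:generalMoment}.

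\emph{Main obstacle.} The only delicate point is the multiplicity bookkeeping. One must note that $\lambda\mapsto(H_\lambda,r_\lambda)$ determines the ideal $I_\lambda$, because $H_\lambda$ and $\mathfrak p_i^{r_\lambda}$ are the coprime $\mathfrak p_i$-free and $\mathfrak p_i$-parts of $I_\lambda$. This is what lets the bound $s$ be used for each $r$ separately, giving the geometric series $s/(q_i-1)$, even though the residues $a_\lambda$ for a common $H$ may differ. Dropping the compatibility of those residues is harmless because the measure bound is uniform over all classes modulo $L$.
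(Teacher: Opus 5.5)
Your proposal is correct and matches the paper's proof. You expand the $k$-th power of \eqref{eq:alphaUnion}, observe that each compatible system of congruences is a single class modulo $L=\lcm(H_1,\ldots,H_k)\mid Q_{i-1}$ whose measure \cref{lem:classBound} bounds by $\nu(L)/N(L)$, and use $\mult(\cA)\le s$ for each modulus $H\mathfrak p_i^{r}$ before summing $\sum_{r\ge1}q_i^{-r}=1/(q_i-1)$; the first-moment bound is the unenlarged $k=1$ case, and your grouping by $(H_\lambda,r_\lambda)$ just spells out the paper's ``at most $s^k$ residue-class tuples'' step.
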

\begin{proof}
Expand the \(k\)-th power of \eqref{eq:alphaUnion}.  The simultaneous congruences modulo \(H_1,\ldots,H_k\) are either incompatible or define one residue class modulo \(L=\lcm(H_1,\ldots,H_k)\).  In the latter case \cref{lem:classBound} bounds its \(P_{i-1}\)-measure by \(\nu(L)/N(L)\).  Each underlying ideal modulus supports at most \(s\) residue classes, so a \(k\)-tuple of underlying moduli gives at most \(s^k\) residue-class tuples.  Finally enlarge the exponent sums and use \(\sum_{r\ge1}q_i^{-r}=1/(q_i-1)\).  The case \(k=1\) without enlargement gives \eqref{eq:sharpFirstMoment}.
\end{proof}

\begin{proposition}[First and second moments]\label{prop:firstSecond}
For every \(i\),
\begin{align}
 M_i^{(1)}&\le\frac{s}{q_i-1}
 \prod_{j<i}\left(1+\frac1{(1-\theta_j)(q_j-1)}\right),\label{eq:firstMomentProduct}\\
 M_i^{(2)}&\le\frac{s^2}{(q_i-1)^2}
 \prod_{j<i}\left(1+\frac{3q_j-1}{(1-\theta_j)(q_j-1)^2}\right).
 \label{eq:secondMomentProduct}
\end{align}
\end{proposition}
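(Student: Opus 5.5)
The plan is to specialise \cref{lem:generalMoment} to \(k=1\) and \(k=2\), and to evaluate the resulting divisor sums over \(Q_{i-1}\) as Euler products, using that \(\nu\) and \(N\) are multiplicative.

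For \eqref{eq:firstMomentProduct}, take \(k=1\) in \eqref{eq:generalMoment}. This gives
\[
 M_i^{(1)}\le\frac{s}{q_i-1}\sum_{H\mid Q_{i-1}}\frac{\nu(H)}{N(H)}.
\]
The summand is multiplicative in \(H\) and \(Q_{i-1}=\prod_{j<i}\mathfrak p_j^{v_j}\), so the sum factors as
\[
 \prod_{j<i}\left(1+\sum_{r=1}^{v_j}\frac{1}{(1-\theta_j)q_j^r}\right).
\]
Extending each inner sum to \(r\ge1\) gives \(\sum_{r\ge1}q_j^{-r}=1/(q_j-1)\), which yields exactly the stated product.

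For \eqref{eq:secondMomentProduct}, take \(k=2\). We must bound
\[
 S=\sum_{H_1,H_2\mid Q_{i-1}}\frac{\nu(\lcm(H_1,H_2))}{N(\lcm(H_1,H_2))}.
\]
The pair \((H_1,H_2)\) decomposes prime by prime into pairs of exponents \((a,b)\) with \(0\le a,b\le v_j\). Both \(\nu\) and \(N\) of the lcm are multiplicative, with the local factor depending only on \(\max(a,b)\). Hence \(S\) is a product over \(j<i\) of local sums. In each local sum, \(\max(a,b)=0\) contributes \(1\). For \(\max(a,b)=m\ge1\) there are \(2m+1\) pairs, each weighted by \(\nu(\mathfrak p_j^m)/q_j^m=1/((1-\theta_j)q_j^m)\). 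So the local factor is at most
\[
 1+\frac1{1-\theta_j}\sum_{m\ge1}\frac{2m+1}{q_j^m}.
\]

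The only computational step is evaluating this series. Write \(x=1/q_j\), so that \(\sum_{m\ge1}(2m+1)x^m=2x/(1-x)^2+x/(1-x)\). Returning to \(q_j\), this is \(2q_j/(q_j-1)^2+1/(q_j-1)\), which combines to \((3q_j-1)/(q_j-1)^2\). Substituting gives the claimed product.

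I expect no real obstacle; the only points needing care are already built into the statement of \cref{lem:generalMoment}. One is the multiplicity factor \(s^k\), which is accounted for there. The other is the possible incompatibility of the congruences, which only helps the bound.
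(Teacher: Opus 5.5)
Your proposal is correct and follows the paper's own proof essentially verbatim. Like the paper, you specialise \cref{lem:generalMoment} to \(k=1,2\), factor the divisor sums prime by prime, and count the \((r+1)^2-r^2=2r+1\) exponent pairs whose maximum is \(r\). Your explicit evaluation \(\sum_{r\ge1}(2r+1)q_j^{-r}=(3q_j-1)/(q_j-1)^2\) fills in the one computation the paper states without derivation.
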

\begin{proof}
For \(k=1\), factor the divisor sum in \cref{lem:generalMoment} prime by prime and use
\[
 \sum_{r=1}^{v_j}q_j^{-r}\le\frac1{q_j-1}.
\]
For \(k=2\), if the maximum of the two local exponents at \(\mathfrak p_j\) equals \(r\ge1\), there are \((r+1)^2-r^2=2r+1\) ordered pairs.  Hence the local factor is bounded by
\[
 1+\frac1{1-\theta_j}\sum_{r\ge1}\frac{2r+1}{q_j^r}
 =1+\frac{3q_j-1}{(1-\theta_j)(q_j-1)^2}.
\]
\end{proof}

\begin{remark}[A possible scope beyond Dedekind domains]\label{rem:beyondDedekind}
The probabilistic construction above uses less structure than the full ring structure of a Dedekind domain.  Its essential inputs are: a multiplicative system of moduli with unique prime factorisation; finite residue spaces \(X_I\) of cardinality \(N(I)\); compatible surjective maps \(X_J\to X_I\) whenever \(I\mid J\), with uniform fibres of size \(N(J)/N(I)\); and a Chinese-remainder-type product decomposition for coprime moduli.  Under such axioms the fibre-by-fibre definition of the measures \(P_i\), the moment estimates, and the distortion argument have formal analogues.  We do not develop an abstract ``residue arithmetical semigroup'' theory here, since doing so would require additional definitions unrelated to the weighted application.  The observation nevertheless suggests that the distortion method should extend beyond the Dedekind-domain setting whenever a compatible finite residue theory is available.
\end{remark}

\section{Proof of the weighted density theorem}\label{sec:weightedProof}
We now choose the distortion parameters.  The following estimate is the only place where the exponent \(3\) appears.

\begin{lemma}\label{lem:eulerProduct}
Suppose \(\theta_j=0\), or
\[
 \frac1{1-\theta_j}=1+a(q_j),\qquad a(q_j)\le1.
\]
Then, uniformly in the finite system,
\begin{equation}\label{eq:eulerProductBound}
 \prod_{j<i}\left(1+\frac{3q_j-1}{(1-\theta_j)(q_j-1)^2}\right)
 \ll_D (\log q_i)^3e^{3A_a(q_i)}.
\end{equation}
\end{lemma}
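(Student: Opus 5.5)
We bound the logarithm of the product directly. Write \(T_j=\frac{3q_j-1}{(1-\theta_j)(q_j-1)^2}\). Using \(\log(1+u)\le u\),
\[
 \log\prod_{j<i}(1+T_j)\le\sum_{j<i}T_j .
\]
In both cases of the hypothesis we have \(\frac1{1-\theta_j}\le 1+a(q_j)\): this holds with equality in the second case, and when \(\theta_j=0\) the factor is \(1\le1+a(q_j)\), since \(a>0\). Hence
\[
 T_j\le \frac{3q_j-1}{(q_j-1)^2}+\frac{(3q_j-1)a(q_j)}{(q_j-1)^2}.
\]
Because \(q_j\ge2\), we have \(\frac{3q_j-1}{(q_j-1)^2}=\frac{3}{q_j}+O(q_j^{-2})\), with an absolute implied constant. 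The same bound gives \(\frac{(3q_j-1)a(q_j)}{(q_j-1)^2}\le\frac{3a(q_j)}{q_j}+O(q_j^{-2})\), where the bound \(a(q_j)\le1\) is used exactly to control the error term.

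The index set \(\{j<i\}\) consists of distinct prime ideals \(\mathfrak p_j\), and by the ordering in \eqref{eq:Qfactor} each has \(N\mathfrak p_j\le q_i\). We may therefore enlarge the sum to all prime ideals of norm at most \(q_i\). This enlargement is what makes the bound uniform in the finite system. We obtain
\[
 \sum_{j<i}T_j\le 3\sum_{N\mathfrak p\le q_i}\frac1{N\mathfrak p}+3\sum_{N\mathfrak p\le q_i}\frac{a(N\mathfrak p)}{N\mathfrak p}+O\Bigl(\sum_{\mathfrak p}\frac1{(N\mathfrak p)^2}\Bigr).
\]
The middle sum is exactly \(3A_a(q_i)\) by \eqref{eq:AaDef}. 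The first sum is at most \(3\log\log q_i+3C_D\) by \eqref{eq:MertensUpper}, once \(q_i\) is large. For small \(q_i\) the sum is over a finite set, which is finite because ideals of bounded norm are finite in number; it is then bounded by a constant depending on \(D\), and we absorb that constant. If \(q_i<e\) we interpret \(\log q_i\) as bounded below by \(\log2\), so the claim still holds with an adjusted constant.

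The last step, and the main obstacle, is to show that \(\sum_{\mathfrak p}(N\mathfrak p)^{-2}<\infty\) using only the available hypotheses. Over each rational prime \(p\) there are at most \([K:\mathbb Q]\) primes in the number-field case, but abstractly we have only \eqref{eq:MertensUpper}. Partial summation of that Mertens upper bound gives
\[
 \sum_{N\mathfrak p\le x}1\ll x\log\log x .
\]
This crude bound on \(\pi_D(x)\) suffices, since summing \(t^{-2}\) against it converges; so \(\sum_{\mathfrak p}(N\mathfrak p)^{-2}\) is a constant \(O_D(1)\). More carefully, one can split the sum dyadically: on \(2^k<N\mathfrak p\le2^{k+1}\), the Mertens bound controls \(\sum 1/N\mathfrak p\) by \(O(1)\) uniformly for large \(k\), so the dyadic block of \((N\mathfrak p)^{-2}\) is \(O(2^{-k})\), which is summable.

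Exponentiating then gives
\[
 \prod_{j<i}(1+T_j)\le e^{O_D(1)}(\log q_i)^3e^{3A_a(q_i)},
\]
which is \eqref{eq:eulerProductBound}.
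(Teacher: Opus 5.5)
Your argument is the paper's argument: take logarithms, use $\log(1+u)\le u$, split each term into $3/q_j$, $3a(q_j)/q_j$ and an $O(q_j^{-2})$ error, enlarge to all prime ideals of norm at most $q_i$, apply \eqref{eq:MertensUpper}, and show $\sum_{\mathfrak p}(N\mathfrak p)^{-2}<\infty$. One step, however, is not justified as written.

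The condition $a(q_j)\le 1$ is attached only to the second alternative of the hypothesis. In the branch $\theta_j=0$ nothing bounds $a(q_j)$. So after you replace the factor $1$ by $1+a(q_j)$, the extra error $a(q_j)\bigl(\frac{3q_j-1}{(q_j-1)^2}-\frac{3}{q_j}\bigr)$ is not $O(q_j^{-2})$ with a constant depending only on $D$. This case really occurs. In the proof of \cref{thm:mainWeighted}, $\theta_j=0$ exactly when $q_j\le Y$, and there $a$ may exceed $1$, since only $a(x)\le 1$ for $x>Y$ is arranged. Your unified bound would then force the implied constant to depend on $a$, or the exponent $3A_a(q_i)$ to be enlarged.

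The repair is immediate and is what the paper does: handle $\theta_j=0$ separately.
\begin{itemize}
\item If $\theta_j=0$, then $T_j=\frac{3q_j-1}{(q_j-1)^2}\le \frac{3}{q_j}+\frac{20}{q_j^2}$.
\item Otherwise, use $1/(1-\theta_j)=1+a(q_j)$ together with $a(q_j)\le1$.
\end{itemize}
Both cases then give $T_j\le 3/q_j+3a(q_j)/q_j+40/q_j^2$ with an absolute constant.

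Two smaller points.
\begin{itemize}
\item Your alternative dyadic remark says the block sums $\sum_{2^k<N\mathfrak p\le 2^{k+1}}1/N\mathfrak p$ are $O(1)$ uniformly. An upper Mertens bound alone only gives $O(\log k)$; the paper uses $2^{-m}(\log\log 2^{m+1}+C_D)$. This is still summable after the factor $2^{-k}$. Your main route, via $\pi_D(x)\le x\sum_{N\mathfrak p\le x}1/N\mathfrak p\ll x\log\log x$ and partial summation, is correct as it stands.
\item Finiteness of the set of prime ideals of bounded norm is not a standing assumption in this paper. It follows from \eqref{eq:MertensUpper} itself, since each such prime contributes at least $1/x$ to a finite sum. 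That is the justification you should cite for the small range.
\end{itemize}
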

\begin{proof}
Write
\[
 b(q)=\frac{3q-1}{(q-1)^2},\qquad
 u_j=\frac{3q_j-1}{(1-\theta_j)(q_j-1)^2}
     =\frac{b(q_j)}{1-\theta_j}.
\]
The exact identity
\[
 b(q)-\frac3q=\frac{5q-3}{q(q-1)^2}
\]
implies, for every \(q\ge2\),
\[
 0\le b(q)-\frac3q\le\frac{20}{q^2}.
\]
If \(\theta_j=0\), this gives
\(u_j\le 3/q_j+20/q_j^2\).  Otherwise, by hypothesis,
\((1-\theta_j)^{-1}=1+a(q_j)\) and \(0<a(q_j)\le1\); hence
\[
 \begin{aligned}
 u_j
  &=(1+a(q_j))\left(\frac3{q_j}
       +\frac{5q_j-3}{q_j(q_j-1)^2}\right)\\
  &\le \frac3{q_j}+\frac{3a(q_j)}{q_j}
       +\frac{40}{q_j^2}.
 \end{aligned}
\]
Thus in both cases we have the uniform bound
\begin{equation}\label{eq:eulerTermEstimate}
 0\le u_j\le \frac3{q_j}+\frac{3a(q_j)}{q_j}
                          +\frac{40}{q_j^2}.
\end{equation}
We next verify that the last error term is summable over all prime
ideals, using only the Mertens-type hypothesis \eqref{eq:MertensUpper}.
After increasing its constant to cover finitely many small norms,
write
\[
 S_D(X)=\sum_{N\mathfrak p\le X}\frac1{N\mathfrak p}
      \le\log\log X+C_D\qquad(X\ge2).
\]
For every sufficiently large integer \(m\),
\[
 \begin{aligned}
 \sum_{2^m<N\mathfrak p\le2^{m+1}}\frac1{(N\mathfrak p)^2}
 &\le2^{-m}\sum_{N\mathfrak p\le2^{m+1}}
                      \frac1{N\mathfrak p}\\
 &\le2^{-m}\bigl(\log\log(2^{m+1})+C_D\bigr).
 \end{aligned}
\]
The last expression is \(O_D(2^{-m}\log(m+2))\), which is
summable in \(m\).  The bounded initial range contains only finitely
many prime ideals by the same hypothesis.  Consequently,
\begin{equation}\label{eq:primeSquareSum}
 C_{2,D}:=\sum_{\mathfrak p}\frac1{(N\mathfrak p)^2}<\infty.
\end{equation}
Now put
\[
 \mathcal P_i=\prod_{j<i}(1+u_j).
\]
Because \(u_j\ge0\) and \(\log(1+u)\le u\) for \(u\ge0\),
\eqref{eq:eulerTermEstimate} yields
\[
 \begin{aligned}
 \log\mathcal P_i
 &=\sum_{j<i}\log(1+u_j)\\
 &\le3\sum_{j<i}\frac1{q_j}
       +3\sum_{j<i}\frac{a(q_j)}{q_j}
       +40\sum_{j<i}\frac1{q_j^2}.
 \end{aligned}
\]
The primes are ordered by nondecreasing norm, with ties allowed.
Therefore \(j<i\) implies \(q_j\le q_i\), and hence
\[
 \sum_{j<i}\frac1{q_j}\le S_D(q_i)
 \le\log\log q_i+C_D,
 \qquad
 \sum_{j<i}\frac{a(q_j)}{q_j}\le A_a(q_i).
\]
The remaining term is at most \(40C_{2,D}\).  Combining these
estimates gives
\[
 \log\mathcal P_i
 \le3\log\log q_i+3A_a(q_i)+3C_D+40C_{2,D}.
\]
Exponentiating, we conclude that
\[
 \mathcal P_i\le
 \exp(3C_D+40C_{2,D})\,(\log q_i)^3e^{3A_a(q_i)},
\]
which is \eqref{eq:eulerProductBound}.  The constant depends only on
the prime-norm distribution of \(D\) (and the chosen Mertens constant),
not on \(i\), the finite residue-class family, its multiplicity, or the
particular admissible function \(a\).
\end{proof}

\begin{proof}[Proof of \cref{thm:mainWeighted}]
Fix \(s\).  By \eqref{eq:generalCriterion} and \(a(x)\to0\), choose \(Y\) so large that \(a(x)\le1\) for \(x>Y\) and
\begin{equation}\label{eq:chooseY}
 C_Ds^2\sum_{N\mathfrak p>Y}
 \frac{(\log N\mathfrak p)^3e^{3A_a(N\mathfrak p)}}{(N\mathfrak p)^2a(N\mathfrak p)}
 \le\frac14,
\end{equation}
where \(C_D\) is large enough for \cref{prop:firstSecond,lem:eulerProduct}.
Set
\[
 \theta_i=
 \begin{cases}
 0,&q_i\le Y,\\[1mm]
 \dfrac{a(q_i)}{1+a(q_i)},&q_i>Y.
 \end{cases}
\]
For \(q_i>Y\),
\[
 \frac1{1-\theta_i}=1+a(q_i),\qquad
 \theta_i(1-\theta_i)=\frac{a(q_i)}{(1+a(q_i))^2},
\]
so
\[
 \frac1{4\theta_i(1-\theta_i)}\le\frac1{a(q_i)}.
\]
By \cref{prop:firstSecond,lem:eulerProduct},
\[
 \frac{M_i^{(2)}}{4\theta_i(1-\theta_i)}
 \ll_D s^2\frac{(\log q_i)^3e^{3A_a(q_i)}}{q_i^2a(q_i)}.
\]
Thus \eqref{eq:chooseY} gives
\begin{equation}\label{eq:largePrimeQuarter}
 \sum_{q_i>Y}\frac{M_i^{(2)}}{4\theta_i(1-\theta_i)}\le\frac14.
\end{equation}

For the small-prime stages, let \(\cS_Y\) be the set of ideals all of whose prime factors have norm at most \(Y\).  Since only finitely many such prime ideals occur,
\[
 \sum_{I\in\cS_Y}\frac1{N(I)}
 =\prod_{N\mathfrak p\le Y}\left(1-\frac1{N\mathfrak p}\right)^{-1}<\infty.
\]
If \(q_i\le Y\), then every \(j\le i\) also satisfies \(q_j\le Y\), so all relevant \(\theta_j\) vanish and \(\nu(H)=1\) for \(H\mid Q_i\).  The sharper first-moment estimate \eqref{eq:sharpFirstMoment} gives
\[
 \sum_{q_i\le Y}M_i^{(1)}
 \le s\sum_{\substack{I\in\cS_Y\\N(I)\ge M}}\frac1{N(I)}.
\]
Choose \(M\) large enough that this is at most \(1/4\).  Together with \eqref{eq:largePrimeQuarter} we have \(\eta_*\le1/2\).  Hence \cref{thm:abstractDistortion} yields
\[
 d(R)\ge\frac12\exp\left(-4\sum_\lambda\frac{\nu(I_\lambda)}{N(I_\lambda)}\right).
\]
Finally, \(\nu(\mathfrak p^r)=1\) for \(N\mathfrak p\le Y\) and \(\nu(\mathfrak p^r)=1+a(N\mathfrak p)\) for \(N\mathfrak p>Y\), so \(\nu(I)\le\mu_a(I)\) for every \(I\).  This proves \eqref{eq:mainDensity}.
\end{proof}

\begin{corollary}\label{cor:simpleCriterion}
If \(a(x)\to0\) and \eqref{eq:simpleA}--\eqref{eq:simpleB} hold, then the conclusion of \cref{thm:mainWeighted} holds.
\end{corollary}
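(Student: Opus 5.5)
The plan is to reduce \cref{cor:simpleCriterion} directly to \cref{thm:mainWeighted} by checking that the criterion \eqref{eq:generalCriterion} holds. The hypotheses \(a(x)\to0\) and \eqref{eq:MertensUpper} are shared by both statements. It therefore suffices to show that the sum in \eqref{eq:generalCriterion} converges.

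The key step is to show that \(A_a\) is bounded. Since \(a>0\), the function \(A_a(x)=\sum_{N\mathfrak p\le x}a(N\mathfrak p)/N\mathfrak p\) is nondecreasing in \(x\). It is dominated by the full series
\[
 A_\infty:=\sum_{\mathfrak p}\frac{a(N\mathfrak p)}{N\mathfrak p},
\]
which is finite by \eqref{eq:simpleA}. Hence \(e^{3A_a(N\mathfrak p)}\le e^{3A_\infty}\) uniformly over all prime ideals.

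Inserting this bound termwise into \eqref{eq:generalCriterion} gives
\[
 \sum_{\mathfrak p}\frac{(\log N\mathfrak p)^3e^{3A_a(N\mathfrak p)}}{(N\mathfrak p)^2a(N\mathfrak p)}
 \le e^{3A_\infty}\sum_{\mathfrak p}\frac{(\log N\mathfrak p)^3}{(N\mathfrak p)^2a(N\mathfrak p)},
\]
and the right-hand side is finite by \eqref{eq:simpleB}. Thus all hypotheses of \cref{thm:mainWeighted} are satisfied, and for each fixed \(s\) we obtain the existence of \(M(D,a,s)\) together with the density bound \eqref{eq:mainDensity}.

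No step is a real obstacle. The only point needing care is that \eqref{eq:simpleA} bounds \(A_a\) uniformly in \(x\), not merely at each fixed \(x\); this is exactly the monotone domination by \(A_\infty\) used above. All terms are positive, so rearranging the sums over prime ideals causes no convergence issues.
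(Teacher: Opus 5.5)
Your proposal is correct and is the same argument as the paper's. Condition \eqref{eq:simpleA} bounds \(A_a\) uniformly by the full convergent series, so the factor \(e^{3A_a(N\mathfrak p)}\) is bounded and \eqref{eq:generalCriterion} follows from \eqref{eq:simpleB}, after which \cref{thm:mainWeighted} applies directly.
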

\begin{proof}
Condition \eqref{eq:simpleA} makes \(A_a(x)\) bounded, so \eqref{eq:generalCriterion} follows from \eqref{eq:simpleB}.
\end{proof}

\begin{proof}[Proof of \cref{cor:explicitWeight}]
Put
\[
 a_\varepsilon(x)=\frac{(\log x)^3\ell_2(x)^{1+\varepsilon}}x.
\]
It is enough, by \cref{cor:simpleCriterion}, to prove \eqref{eq:simpleA} and \eqref{eq:simpleB}.  Split prime ideals into blocks \(e^t<N\mathfrak p\le e^{t+1}\), with \(t\) large.  By \eqref{eq:primeIdealUpper}, the number of prime ideals in the \(t\)-th block is \(O_D(e^{t+1}/t)\).  The contribution of that block to
\[
 \sum_{\mathfrak p}\frac{a_\varepsilon(N\mathfrak p)}{N\mathfrak p}
\]
is therefore
\[
 O_D\!\left(e^{-t}t^2(\log t)^{1+\varepsilon}\right),
\]
which is summable.  On the other hand, \eqref{eq:simpleB} becomes
\[
 \sum_{\mathfrak p}\frac1{N\mathfrak p\,\ell_2(N\mathfrak p)^{1+\varepsilon}},
\]
whose \(t\)-th block is
\[
 O_D\!\left(\frac1{t(\log t)^{1+\varepsilon}}\right).
\]
This is summable as well.  The finitely many small prime ideals are harmless.  Finally,
\[
 (\log\log x)^{1+\varepsilon}=o((\log x)^\varepsilon),
\]
which proves the asymptotic comparison with the weight in \cite{BBMST2022}.
\end{proof}

\section{Consequences of Axiom A}\label{sec:axiomA}
The weighted density theorem was deliberately formulated under the prime-ideal estimates actually used in the distortion argument.  We now impose the stronger structural hypothesis of Axiom~A and derive several consequences.  This produces a second, more specialised layer of the theory: Axiom~A verifies the critical analytic input automatically, gives a general weighted mean-value theorem, and reveals a subcritical regime in which no distortion is needed.

\subsection{The critical exponent \texorpdfstring{\(\rho=1\)}{rho=1}}

\begin{corollary}[Axiom A implies the critical analytic hypotheses]\label{cor:AxiomACritical}
Suppose that the ideal semigroup of \(D\) satisfies Axiom~A with exponent \(1\).  Then \eqref{eq:MertensUpper} and \eqref{eq:primeIdealUpper} hold.  Consequently, \cref{thm:mainWeighted} applies to every weight satisfying \eqref{eq:generalCriterion}, and \cref{cor:explicitWeight} applies in particular.
\end{corollary}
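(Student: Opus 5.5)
The plan is to read this off \cref{prop:AxiomAanalytic} with \(\rho=1\), and then to check that nothing else in \cref{thm:mainWeighted} or \cref{cor:explicitWeight} depends on Axiom~A.

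\emph{Step 1.} Specialise \eqref{eq:AxiomAMertensRho} to \(\rho=1\). This gives
\[
 \sum_{N\mathfrak p\le x}\frac1{N\mathfrak p}=\log\log x+C_{D,1}+O_D\!\left(\frac1{\log x}\right).
\]
The error term is bounded for \(x\ge2\), so replacing \(C_{D,1}\) by a larger constant \(C_D\) yields \eqref{eq:MertensUpper} for all sufficiently large \(x\).

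\emph{Step 2.} Specialise \eqref{eq:abstractPNT} to \(\rho=1\), which gives \(\pi_D(x)\sim x/\log x\). Hence \(\pi_D(x)\le 2x/\log x\) for \(x\ge x_0\). For \(2\le x<x_0\), the count \(\pi_D(x)\le\pi_D(x_0)\) is finite, because Axiom~A makes the number of ideals of bounded norm finite. A constant adjustment therefore gives \eqref{eq:primeIdealUpper} for all \(x\ge2\). Both steps were already recorded in the last part of the proof of \cref{prop:AxiomAanalytic}, so I would simply cite it.

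\emph{Step 3.} Note that \cref{thm:mainWeighted} has exactly \eqref{eq:MertensUpper} as its analytic hypothesis. So for any \(a\) with \(a(x)\to0\) satisfying \eqref{eq:generalCriterion}, its conclusion holds verbatim. Likewise, \cref{cor:explicitWeight} assumes only \eqref{eq:MertensUpper} and \eqref{eq:primeIdealUpper}, and both are now established.

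The only point needing care is Step~2 at small \(x\): an asymptotic does not by itself give a uniform \(\ll\) bound. Finiteness of the ideals of bounded norm under \eqref{eq:AxiomA} closes this gap. No genuine obstacle arises; the corollary is a bookkeeping consequence of \cref{prop:AxiomAanalytic}.
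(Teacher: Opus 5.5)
Your proposal is correct and follows the paper's own argument. The paper says the corollary is immediate from \cref{prop:AxiomAanalytic}, whose final paragraph derives \eqref{eq:MertensUpper} and \eqref{eq:primeIdealUpper} for $\rho=1$ by the same constant adjustments you describe, and your added care about the uniform $\ll$ bound for small $x$ is a correct elaboration of that step.
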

\begin{proof}
This is immediate from \cref{prop:AxiomAanalytic}.
\end{proof}

Thus the linear ideal-counting formula \eqref{eq:linearIdealCountIntro}, which appeared in the introduction, should be viewed as a concrete instance of Axiom~A rather than as an unrelated additional hypothesis.  In particular, for rings of integers of number fields the classical ideal-counting theorem places the weighted density result in this critical regime.

\subsection{Weighted mean values for a general Axiom A exponent}
Let \(D\) satisfy Axiom~A with exponent \(\rho>0\):
\begin{equation}\label{eq:AxiomASection}
 N_D(x)=\kappa_Dx^\rho+O_D(x^\eta),\qquad 0\le\eta<\rho.
\end{equation}
Let \(a_{\mathfrak p}\ge0\) satisfy
\begin{equation}\label{eq:apRhoSummable}
 \sum_{\mathfrak p}\frac{a_{\mathfrak p}}{(N\mathfrak p)^\rho}<\infty,
\end{equation}
and define
\[
 \mu_a(\mathfrak p^r)=1+a_{\mathfrak p},\qquad r\ge1.
\]
Set
\begin{equation}\label{eq:Harho}
 H_{a,\rho}=\prod_{\mathfrak p}\left(1+\frac{a_{\mathfrak p}}{(N\mathfrak p)^\rho}\right).
\end{equation}
The product converges by \eqref{eq:apRhoSummable}.

\begin{theorem}[Weighted mean value under Axiom A]\label{thm:meanValueRho}
Under \eqref{eq:AxiomASection} and \eqref{eq:apRhoSummable},
\begin{equation}\label{eq:meanValueRho}
 \sum_{N\mathfrak a\le x}\mu_a(\mathfrak a)
 =\kappa_DH_{a,\rho}x^\rho+o(x^\rho).
\end{equation}
Consequently, for every fixed \(K>1\),
\begin{equation}\label{eq:harmonicRho}
 \sum_{n<N\mathfrak a\le Kn}
 \frac{\mu_a(\mathfrak a)}{(N\mathfrak a)^\rho}
 =\rho\kappa_DH_{a,\rho}\log K+o(1).
\end{equation}
\end{theorem}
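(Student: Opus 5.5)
The plan is to write \(\mu_a\) as a Dirichlet convolution \(\mu_a=\1*g\) over the ideal semigroup and then apply Axiom~A to the \(\1\)-part. Since \(\mu_a\) is multiplicative with \(\mu_a(\mathfrak p^r)=1+a_{\mathfrak p}\) for all \(r\ge1\), the function \(g\) is multiplicative with
\[
 g(\mathfrak p)=\mu_a(\mathfrak p)-1=a_{\mathfrak p},\qquad g(\mathfrak p^r)=\mu_a(\mathfrak p^r)-\mu_a(\mathfrak p^{r-1})=0\quad(r\ge2).
\]
So \(g\ge0\) is supported on squarefree ideals, with \(g(\mathfrak b)=\prod_{\mathfrak p\mid\mathfrak b}a_{\mathfrak p}\). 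Then
\[
 \sum_{N\mathfrak a\le x}\mu_a(\mathfrak a)=\sum_{\mathfrak b}g(\mathfrak b)\,N_D\!\left(\frac{x}{N\mathfrak b}\right).
\]
By \eqref{eq:apRhoSummable},
\[
 \sum_{\mathfrak b}\frac{g(\mathfrak b)}{(N\mathfrak b)^\rho}=\prod_{\mathfrak p}\Bigl(1+\frac{a_{\mathfrak p}}{(N\mathfrak p)^\rho}\Bigr)=H_{a,\rho}<\infty,
\]
which is the convergent Euler product \eqref{eq:Harho}.

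To pass to the asymptotic, I would not insert the error term \(O(y^\eta)\) termwise, since \(\sum g(\mathfrak b)(N\mathfrak b)^{-\eta}\) need not converge. Instead I would use dominated convergence. Divide by \(x^\rho\) and write the sum as \(\sum_{\mathfrak b}\frac{g(\mathfrak b)}{(N\mathfrak b)^\rho}\,\phi_x(\mathfrak b)\), where
\[
 \phi_x(\mathfrak b)=\frac{N_D(x/N\mathfrak b)}{(x/N\mathfrak b)^\rho}
\]
for \(N\mathfrak b\le x\), and \(\phi_x(\mathfrak b)=0\) otherwise (since \(N_D(y)=0\) for \(y<1\)). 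Axiom~A gives \(N_D(y)\le Cy^\rho\) for all \(y\ge1\), because \(\eta<\rho\) and small \(y\) are handled by finiteness. Hence \(0\le\phi_x\le C\) uniformly. For each fixed \(\mathfrak b\), \(\phi_x(\mathfrak b)\to\kappa_D\) as \(x\to\infty\). The weights \(g(\mathfrak b)(N\mathfrak b)^{-\rho}\) are summable, so dominated convergence (Tannery's theorem) gives \(x^{-\rho}\sum_{N\mathfrak a\le x}\mu_a(\mathfrak a)\to\kappa_DH_{a,\rho}\), which is \eqref{eq:meanValueRho}. An explicit alternative is to split at \(N\mathfrak b\le T\) versus \(N\mathfrak b>T\): the first range uses Axiom~A with \(T\) fixed, and the tail is bounded by \(C\sum_{N\mathfrak b>T}g(\mathfrak b)(N\mathfrak b)^{-\rho}\), which is small.

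For \eqref{eq:harmonicRho} I would use Riemann--Stieltjes partial summation with \(S(t)=\sum_{N\mathfrak a\le t}\mu_a(\mathfrak a)=cx^\rho+E(t)\) and \(E(t)=o(t^\rho)\), where \(c=\kappa_DH_{a,\rho}\). This gives
\[
 \sum_{n<N\mathfrak a\le Kn}\frac{\mu_a(\mathfrak a)}{(N\mathfrak a)^\rho}=\Bigl[\frac{S(t)}{t^\rho}\Bigr]_n^{Kn}+\rho\int_n^{Kn}\frac{S(t)}{t^{\rho+1}}\,dt.
\]
The boundary term equals \(c-c+o(1)\). The integral equals \(\rho c\log K+\rho\int_n^{Kn}o(t^{\rho})t^{-\rho-1}dt\), and the last term is \(o(1)\cdot\log K=o(1)\) because \(K\) is fixed.

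The main obstacle is the exchange of limit and sum. The error exponent \(\eta\) gives no decay against \(g\), since \(a_{\mathfrak p}\) may be as large as \((N\mathfrak p)^\rho\) times a summable sequence. That is why I rely on the uniform bound \(N_D(y)\ll y^\rho\) and dominated convergence rather than on an effective error term. This is also why the statement only claims \(o(x^\rho)\).
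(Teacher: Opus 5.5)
Your proof is correct and follows the same route as the paper: the convolution $\mu_a=\mathbf 1*h$ with $h$ supported on squarefree ideals, then dominated convergence against the summable weights $h(\mathfrak d)(N\mathfrak d)^{-\rho}$, then partial summation with $t^{-\rho}$. The only difference is cosmetic. The paper does insert the $O(y^\eta)$ error termwise, but only over the finite range $N\mathfrak d\le x$, where the factor $(N\mathfrak d/x)^{\rho-\eta}\le 1$ supplies the domination; you instead dominate the ratio $N_D(y)/y^\rho$ directly. Also fix the typo: it should read $S(t)=ct^\rho+E(t)$, not $cx^\rho$.
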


\begin{proof}
Define the multiplicative function \(h\) by
\[
 h(D)=1,\qquad h(\mathfrak p)=a_{\mathfrak p},\qquad
 h(\mathfrak p^r)=0\quad(r\ge2).
\]
Then
\begin{equation}\label{eq:convolutionRho}
 \mu_a=\mathbf1*h,
\end{equation}
and
\begin{equation}\label{eq:HaSeriesRho}
 \sum_{\mathfrak d}\frac{h(\mathfrak d)}{(N\mathfrak d)^\rho}
 =H_{a,\rho}<\infty.
\end{equation}
After enlarging the implied constant in \eqref{eq:AxiomASection}, if necessary, we may use that estimate for every real argument \(y\ge1\); this only changes the constant on a bounded interval.  Hence
\begin{align*}
 \sum_{N\mathfrak a\le x}\mu_a(\mathfrak a)
 &=\sum_{N\mathfrak d\le x}h(\mathfrak d)
 N_D\!\left(\frac{x}{N\mathfrak d}\right)\\
 &=\kappa_Dx^\rho
 \sum_{N\mathfrak d\le x}\frac{h(\mathfrak d)}{(N\mathfrak d)^\rho}
 +O_D\!\left(
 x^\eta\sum_{N\mathfrak d\le x}
 \frac{h(\mathfrak d)}{(N\mathfrak d)^\eta}
 \right).
\end{align*}
The first sum tends to \(H_{a,\rho}\).  Dividing the error term by \(x^\rho\) gives
\[
 \sum_{N\mathfrak d\le x}
 \frac{h(\mathfrak d)}{(N\mathfrak d)^\rho}
 \left(\frac{N\mathfrak d}{x}\right)^{\rho-\eta}.
\]
After extending the summand by zero for \(N\mathfrak d>x\), it tends pointwise to zero and is dominated by the summable series in \eqref{eq:HaSeriesRho}.  Dominated convergence therefore proves \eqref{eq:meanValueRho}.

Let
\[
 S_a(x)=\sum_{N\mathfrak a\le x}\mu_a(\mathfrak a)
 =\kappa_DH_{a,\rho}x^\rho+o(x^\rho).
\]
Partial summation with \(t^{-\rho}\) yields
\begin{align*}
 \sum_{n<N\mathfrak a\le Kn}
 \frac{\mu_a(\mathfrak a)}{(N\mathfrak a)^\rho}
 &=\frac{S_a(Kn)}{(Kn)^\rho}-\frac{S_a(n)}{n^\rho}
 +\rho\int_n^{Kn}\frac{S_a(t)}{t^{\rho+1}}\,dt\\
 &=\rho\kappa_DH_{a,\rho}\log K+o(1),
\end{align*}
which is \eqref{eq:harmonicRho}.
\end{proof}

Taking \(\rho=1\) in \cref{thm:meanValueRho} proves \cref{thm:meanValueIntro}.  Thus the mean-value theorem used in the interval application is the critical specialisation of a general Axiom~A statement.

\subsection{Interval density in the critical case}
Assume now Axiom~A with exponent \(1\) and let \(a_{\mathfrak p}=a(N\mathfrak p)\).  In addition to the hypotheses of \cref{thm:mainWeighted}, assume \eqref{eq:simpleA}; then
\[
 H_a=\prod_{\mathfrak p}\left(1+\frac{a(N\mathfrak p)}{N\mathfrak p}\right)<\infty.
\]

\begin{theorem}[Interval density for ideal moduli]\label{thm:interval}
Assume Axiom~A with exponent \(1\), the hypotheses of \cref{thm:mainWeighted}, and \eqref{eq:simpleA}.  Fix \(K>1\) and \(s\ge1\).  Then there exist \(M=M(D,a,K,s)\) and \(c=c(D,a,K,s)>0\) such that if \(\mult(\cA)\le s\) and
\[
 n<N(I_\lambda)\le Kn,\qquad n\ge M,
\]
for every \(\lambda\), then \(d(R)\ge c\).  For \(s=1\), the ideal moduli are pairwise distinct, but equal norms are allowed.
\end{theorem}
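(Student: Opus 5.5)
The plan is to combine \cref{thm:mainWeighted} with the critical mean-value estimate \cref{thm:meanValueIntro}, in the form of \cref{thm:meanValueRho} with \(\rho=1\). The first result bounds \(d(R)\) below by \(\frac12\exp(-4\Sigma)\), where \(\Sigma=\sum_\lambda\mu_a(I_\lambda)/N(I_\lambda)\). The second bounds \(\Sigma\) uniformly when all moduli lie in a dyadic-type window \((n,Kn]\).

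First, \cref{cor:AxiomACritical} shows that Axiom~A with exponent \(1\) supplies \eqref{eq:MertensUpper} and \eqref{eq:primeIdealUpper}. Hence \cref{thm:mainWeighted} applies for the given \(a\) and \(s\), producing a threshold \(M_0=M(D,a,s)\). We take \(M\ge M_0\). Then \(n\ge M\) and \(N(I_\lambda)>n\) give \(N(I_\lambda)\ge M_0\), so \eqref{eq:mainDensity} holds.

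Next we bound \(\Sigma\). Since \(\mult(\cA)\le s\), each ideal \(I\) occurs as a modulus at most \(s\) times. Because \(\mu_a\ge 0\), we get
\[
 \Sigma\le s\sum_{n<N\mathfrak a\le Kn}\frac{\mu_a(\mathfrak a)}{N\mathfrak a}.
\]
Here the sum runs over distinct ideals, and equal norms cause no problem since the bound counts ideals, not norms. Setting \(a_{\mathfrak p}=a(N\mathfrak p)\), hypothesis \eqref{eq:simpleA} is precisely the summability needed in \cref{thm:meanValueIntro}. That theorem gives
\[
 \sum_{n<N\mathfrak a\le Kn}\frac{\mu_a(\mathfrak a)}{N\mathfrak a}=\kappa_DH_a\log K+o(1)
\]
as \(n\to\infty\). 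Enlarging \(M\) if necessary, we may assume the \(o(1)\) term is at most \(1\) for all \(n\ge M\). Then \(\Sigma\le s(\kappa_DH_a\log K+1)\), and therefore
\[
 d(R)\ge c:=\tfrac12\exp\bigl(-4s(\kappa_DH_a\log K+1)\bigr)>0,
\]
which depends only on \(D,a,K,s\).

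The only delicate point is that the \(o(1)\) in the mean-value theorem is a limit in \(n\) alone, uniform over all families. This holds because it concerns a fixed sum over all ideals in the window, not the family itself. So choosing \(M\) large enough, depending on \(D,a,K\), suffices, and \(s\) enters only through the factor \(s\) and \(M_0\). No real obstacle remains beyond checking that \(H_a<\infty\), which follows from \eqref{eq:simpleA}. The final sentence of the statement about \(s=1\) is just the meaning of \(\mult(\cA)\le1\): distinct ideals, with norms allowed to coincide.
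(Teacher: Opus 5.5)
Your proposal is correct and follows the paper's proof essentially verbatim: apply \cref{thm:meanValueRho} with \(\rho=1\) and use multiplicity at most \(s\) to bound \(\sum_\lambda\mu_a(I_\lambda)/N(I_\lambda)\) by \(s(\kappa_DH_a\log K+1)\) for large \(n\). Then enlarge \(M\) so that \cref{thm:mainWeighted} applies, which gives the same constant \(c=\tfrac12\exp\{-4s(\kappa_DH_a\log K+1)\}\).
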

\begin{proof}
By \cref{thm:meanValueRho} with \(\rho=1\),
\[
 \sum_{n<N\mathfrak a\le Kn}\frac{\mu_a(\mathfrak a)}{N\mathfrak a}
 =\kappa_DH_a\log K+o(1).
\]
Since each ideal occurs at most \(s\) times,
\[
 \sum_\lambda\frac{\mu_a(I_\lambda)}{N(I_\lambda)}
 \le s\sum_{n<N\mathfrak a\le Kn}\frac{\mu_a(\mathfrak a)}{N\mathfrak a}
 \le s(\kappa_DH_a\log K+1)
\]
for all sufficiently large \(n\).  Increase \(M\) so that \cref{thm:mainWeighted} also applies.  One may then take
\[
 c=\frac12\exp\{-4s(\kappa_DH_a\log K+1)\}.
\]
\end{proof}

\begin{remark}
Kim \cite{Kim2009} proved number-field analogues of earlier Erd\H{o}s--Graham type density results.  The point of \cref{thm:interval} here is not a priority claim in the number-field case, but rather that the weighted argument works directly with ideals and therefore never replaces distinct ideals by distinct norms.
\end{remark}

\subsection{The subcritical range \texorpdfstring{\(0<\rho<1\)}{0<rho<1}}
The change in behaviour at \(\rho=1\) can already be seen from \cref{prop:AxiomAanalytic}.  When \(0<\rho<1\), the prime reciprocal series converges and the first moment alone controls the covering density.

\begin{theorem}[Subcritical Axiom A regime]\label{thm:subcritical}
Assume Axiom~A with exponent \(0<\rho<1\).  Fix \(s\ge1\).  For every \(\varepsilon>0\) there exists \(M=M(D,s,\varepsilon)\) such that every finite family \(\cA=\{a_\lambda+I_\lambda\}\) with \(\mult(\cA)\le s\) and
\[
 N(I_\lambda)\ge M\qquad(\lambda\in\Lambda)
\]
satisfies
\begin{equation}\label{eq:subcriticalDensity}
 d(R)\ge1-\varepsilon.
\end{equation}
\end{theorem}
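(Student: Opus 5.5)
The plan is a first-moment (union bound) argument with no distortion at all. Since every class \(a_\lambda+I_\lambda\) has density \(1/N(I_\lambda)\), we have
\[
 1-d(R)\le\sum_{\lambda\in\Lambda}\frac1{N(I_\lambda)}
 \le s\sum_{\substack{\mathfrak a\in\cI(D)\\ N\mathfrak a\ge M}}\frac1{N\mathfrak a},
\]
where the second inequality uses \(\mult(\cA)\le s\): each ideal modulus carries at most \(s\) classes. It therefore suffices to show that the full reciprocal series \(\sum_{\mathfrak a}1/N\mathfrak a\) converges in the subcritical range. Given convergence, its tail beyond \(M\) tends to \(0\), and we choose \(M=M(D,s,\varepsilon)\) so that \(s\) times the tail is at most \(\varepsilon\), which gives \eqref{eq:subcriticalDensity}.

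There are two routes to convergence, and I would record the more direct one.

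\emph{Direct route.} Axiom~A with exponent \(\rho<1\) gives \(N_D(x)\ll_D x^\rho\). Partial summation then yields
\[
 \sum_{N\mathfrak a\le Y}\frac1{N\mathfrak a}=\frac{N_D(Y)}{Y}+\int_1^Y\frac{N_D(t)}{t^2}\,dt.
\]
The boundary term is \(O(Y^{\rho-1})\to0\), and the integral converges because \(\int^\infty t^{\rho-2}\,dt<\infty\). The same computation bounds the tail:
\[
 \sum_{N\mathfrak a>X}\frac1{N\mathfrak a}\le\int_X^\infty\frac{N_D(t)}{t^2}\,dt\ll_{D,\rho}X^{\rho-1}.
\]
This even makes \(M\) explicit, roughly \(M\asymp(s/\varepsilon)^{1/(1-\rho)}\).

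\emph{Alternative route.} Use the Euler product \(\prod_{\mathfrak p}(1-1/N\mathfrak p)^{-1}\), which converges by \eqref{eq:subcriticalPrimeSum} from \cref{prop:AxiomAanalytic}. By unique factorisation this product equals \(\sum_{\mathfrak a}1/N\mathfrak a\). This route needs a small argument that there are only finitely many prime ideals below each bound, but that is automatic under Axiom~A.

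Neither step looks like a genuine obstacle. The only points needing care are the following:
\begin{itemize}
\item confirming that the union bound for periodic density is legitimate, which holds by working modulo \(Q=\lcm I_\lambda\) with the normalisation \eqref{eq:periodicDensity}, where each class has exactly \(N(Q)/N(I_\lambda)\) residues;
\item handling the constant in \eqref{eq:AxiomA} for small \(t\), which we absorb by enlarging the implied constant, as in the proof of \cref{thm:meanValueRho}.
\end{itemize}
In the write-up I would also note the contrast with the critical case. For \(\rho=1\) the reciprocal series diverges like \(\log x\), so this first-moment argument fails there, which is why \cref{thm:mainWeighted} is needed.
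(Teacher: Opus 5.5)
Your argument is correct, but it follows a different and more direct route than the paper.

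\textbf{The paper's route.} The paper keeps the staged distortion framework with every \(\theta_i=0\), so that \(P_i=P_0\), and splits the stages at a threshold \(Y\). For the large-prime stages it uses \cref{prop:firstSecond} to get \(M_i^{(1)}\le sC_D'/(q_i-1)\), where \(C_D'=\prod_{\mathfrak p}(1+1/(N\mathfrak p-1))\). For the small-prime stages it uses \eqref{eq:sharpFirstMoment} to reduce to the tail \(\sum_{I\in\cS_Y,\,N(I)\ge M}1/N(I)\) of a \(Y\)-smooth reciprocal sum. The convergence it needs there comes from \eqref{eq:subcriticalPrimeSum}, and hence ultimately from Knopfmacher's abstract prime number theorem via \cref{prop:AxiomAanalytic}.

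\textbf{Your route.} You apply the union bound directly to the residue classes. This reduces everything to the tail of \(\sum_{\mathfrak a}1/N\mathfrak a\), which you bound by partial summation using only \(N_D(x)\ll x^\rho\).

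\textbf{Comparison.} Both arguments rest on the same finiteness, since the paper's constant \(C_D'\) equals \(\prod_{\mathfrak p}(1-1/N\mathfrak p)^{-1}=\sum_{\mathfrak a}1/N\mathfrak a\). Your route, however, obtains that finiteness from the ideal count alone. It bypasses the prime-ideal theory and the stage decomposition entirely. It uses only the upper-bound half of Axiom~A; in fact it uses only the convergence of \(\sum_{\mathfrak a}1/N\mathfrak a\). It also gives an explicit admissible \(M=O_{D,\rho}\bigl((s/\varepsilon)^{1/(1-\rho)}\bigr)\). What the paper's route buys is mainly expository. It presents the subcritical theorem as the degenerate case \(\theta_i\equiv0\) of the distortion machinery, and it ties the transition at \(\rho=1\) to the behaviour of the prime reciprocal sum, as in \cref{rem:criticality}.
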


\begin{proof}
By \eqref{eq:subcriticalPrimeSum},
\[
 \sum_{\mathfrak p}\frac1{N\mathfrak p-1}<\infty,
\]
so
\[
 C_D':=\prod_{\mathfrak p}\left(1+\frac1{N\mathfrak p-1}\right)<\infty.
\]
Choose \(Y\) so large that
\begin{equation}\label{eq:subcriticalHigh}
 sC_D'\sum_{N\mathfrak p>Y}\frac1{N\mathfrak p-1}<\frac{\varepsilon}{2}.
\end{equation}
Take \(\theta_i=0\) for every stage.  Then the measures are not distorted: after uniform extension, \(P_i=P_0\) for all \(i\).

For the stages with \(q_i=N\mathfrak p_i>Y\), \cref{prop:firstSecond} gives
\[
 M_i^{(1)}\le\frac{sC_D'}{q_i-1}.
\]
Hence \eqref{eq:subcriticalHigh} implies
\[
 \sum_{q_i>Y}M_i^{(1)}<\frac{\varepsilon}{2}.
\]

Let \(\cS_Y\) be the set of ideals all of whose prime factors have norm at most \(Y\).  Since only finitely many prime ideals occur,
\[
 \sum_{I\in\cS_Y}\frac1{N(I)}
 =\prod_{N\mathfrak p\le Y}\left(1-\frac1{N\mathfrak p}\right)^{-1}<\infty.
\]
For \(q_i\le Y\), the sharper estimate \eqref{eq:sharpFirstMoment} gives
\[
 \sum_{q_i\le Y}M_i^{(1)}
 \le s\sum_{\substack{I\in\cS_Y\\N(I)\ge M}}\frac1{N(I)}.
\]
Choose \(M\) so large that this is less than \(\varepsilon/2\).  Therefore
\[
 \sum_iP_0(B_i)=\sum_iM_i^{(1)}<\varepsilon.
\]
The union bound now gives
\[
 1-d(R)=P_0\!\left(\bigcup_iB_i\right)\le\sum_iP_0(B_i)<\varepsilon,
\]
proving \eqref{eq:subcriticalDensity}.
\end{proof}

\begin{remark}[Criticality of the exponent \(1\)]\label{rem:criticality}
Under Axiom~A, the abstract prime number theorem gives a qualitative transition at \(\rho=1\).  For \(0<\rho<1\), \(\sum_{\mathfrak p}(N\mathfrak p)^{-1}\) converges and \cref{thm:subcritical} shows that no distortion is necessary.  At \(\rho=1\), the reciprocal prime sum grows like \(\log\log x\), producing the logarithmic Euler-product growth that drives \cref{thm:mainWeighted}.  We do not pursue the regime \(\rho>1\) here; the present choice of distortion parameters is not designed for the resulting power-growth prime sums.  The relevance of the subcritical case to natural classes of finite-norm Dedekind domains is a separate structural question, and \cref{thm:subcritical} should be read as a conditional consequence of Axiom~A in that range.
\end{remark}

\section{Higher moments and the logarithmic scale}\label{sec:higher}
We close with a limitation of the direct integer-moment hierarchy.  This section concerns the method, not the sharp threshold of the covering problem.

Assume in this section that Axiom~A holds with exponent \(1\), so that by \cref{prop:AxiomAanalytic}
\begin{equation}\label{eq:MertensAsymp}
 \sum_{N\mathfrak p\le x}\frac1{N\mathfrak p}=\log\log x+O_D(1),
 \qquad
 \pi_D(x)\ll_D\frac{x}{\log x}.
\end{equation}
Also assume \(\sum_{\mathfrak p}a(N\mathfrak p)/(N\mathfrak p)<\infty\).  From \cref{lem:generalMoment}, the local factor for the \(k\)-th moment at \(\mathfrak p_j\) involves
\[
 \sum_{r\ge1}\frac{(r+1)^k-r^k}{q_j^r}
 =\frac{2^k-1}{q_j}+O_k(q_j^{-2}).
\]
The same Euler-product argument as before gives
\begin{equation}\label{eq:kMomentGrowth}
 E_{i-1}[\alpha_i^k]
 \ll_{D,k,s}\frac{(\log q_i)^{2^k-1}}{q_i^k}.
\end{equation}

For \(u,d>0\),
\begin{equation}\label{eq:kDistortionIneq}
 (u-d)_+\le C_k\frac{u^k}{d^{k-1}},
 \qquad C_k=\frac{(k-1)^{k-1}}{k^k}.
\end{equation}
Indeed, maximise \((t-1)/t^k\) over \(t>1\).  Hence the \(k\)-th moment bound in the distortion step carries a loss of order \(\theta_i^{-(k-1)}\).

If
\[
 a(q)\asymp\frac{(\log q)^\beta}{q},
\]
then combining \eqref{eq:kMomentGrowth} and \eqref{eq:kDistortionIneq} leads to a prime contribution of size
\[
 \frac{(\log q)^\gamma}{q},
 \qquad
 \gamma=2^k-1-\beta(k-1).
\]
If \(\gamma<0\), split the primes into blocks \(e^m<N\mathfrak p\le e^{m+1}\).  The prime-counting bound in \eqref{eq:MertensAsymp} shows that the \(m\)-th block contributes \(O_D(m^{\gamma-1})\), and the resulting series converges.  If \(\gamma\ge0\), then the weight \((\log N\mathfrak p)^\gamma\) is eventually bounded below by a positive constant, whereas the first formula in \eqref{eq:MertensAsymp} shows that \(\sum_{\mathfrak p}(N\mathfrak p)^{-1}\) diverges.  Thus the resulting prime sum is summable precisely when \(\gamma<0\), i.e.
\[
 \beta>\frac{2^k-1}{k-1}.
\]
The right side equals \(3\) for \(k=2\), \(7/2\) for \(k=3\), \(5\) for \(k=4\), and is strictly increasing for \(k\ge2\).  We therefore record the following.

\begin{proposition}\label{prop:higherMomentBarrier}
Within the direct integer-moment scheme consisting of \cref{lem:generalMoment} and the inequality \eqref{eq:kDistortionIneq}, the second moment gives the smallest logarithmic exponent.  A direct replacement by a higher integer moment does not lower the exponent \(3\).
\end{proposition}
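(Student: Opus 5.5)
The plan is to make precise the computation sketched just before the statement, reducing the proposition to an inequality about the function
\[
 f(k)=\frac{2^k-1}{k-1},\qquad k\ge2.
\]
There are three steps.

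\emph{Step 1: the $k$-th moment bound.} Using \cref{lem:generalMoment}, I would factor the divisor sum over $(H_1,\ldots,H_k)$ prime by prime. At $\mathfrak p_j$, the number of $k$-tuples of exponents with maximum exactly $r$ is $(r+1)^k-r^k$. So the local factor is
\[
 1+\nu(\mathfrak p_j)\sum_{r\ge1}\frac{(r+1)^k-r^k}{q_j^r}
 =1+\frac{(2^k-1)(1+a(q_j))}{q_j}+O_k\bigl(q_j^{-2}\bigr).
\]
Repeating the argument of \cref{lem:eulerProduct} with $3$ replaced by $2^k-1$ then gives \eqref{eq:kMomentGrowth}. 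The summability of $a(q)/q$ keeps the weight contribution bounded, and $\sum_{\mathfrak p}(N\mathfrak p)^{-2}<\infty$ absorbs the error terms.

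\emph{Step 2: the loss from the distortion inequality.} Replacing $u^2/(4d)$ in \cref{lem:BiMoment} by \eqref{eq:kDistortionIneq}, with $d=\theta_i\asymp a(q_i)$, bounds the stage-$i$ contribution by
\[
 \frac{(\log q_i)^{2^k-1}}{q_i^k\,a(q_i)^{k-1}}.
\]
For $a(q)\asymp(\log q)^\beta/q$ this is $(\log q)^\gamma/q$ with $\gamma=2^k-1-\beta(k-1)$. I would then check, by the block decomposition $e^m<N\mathfrak p\le e^{m+1}$ together with \eqref{eq:MertensAsymp}, that the sum over prime ideals converges if and only if $\gamma<0$. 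For $\gamma<0$ the $m$-th block contributes $O_D(m^{\gamma-1})$. For $\gamma\ge0$ the weight $(\log q)^\gamma$ is eventually bounded below, so the sum dominates a constant multiple of the divergent series $\sum_{\mathfrak p}(N\mathfrak p)^{-1}$. Hence within this scheme the admissible exponents are exactly $\beta>f(k)$.

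\emph{Step 3: minimising over $k$.} It remains to show that $f(k)$ is strictly increasing for $k\ge2$, so its minimum is $f(2)=3$. I would compute
\[
 f(k+1)-f(k)=\frac{k(2^{k+1}-1)-(k-1)(2^k-1)}{k(k-1)}=\frac{2^k(k+1)-1}{k(k-1)}>0.
\]
Equivalently, $f(k+1)>f(k)$ reduces to $2^k(k+1)>1$, which is clear. Consequently no $k\ge3$ permits $\beta\le3$. Since the case $k=2$ is exactly the one realised in \cref{thm:mainWeighted}, the second moment gives the smallest logarithmic exponent, as claimed.

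The main obstacle is interpretive rather than computational. The proposition is a statement about a specific method, so I would first fix precisely what the \emph{direct integer-moment scheme} means: bounding $E_{i-1}[(\alpha_i-\theta_i)_+]$ only through \eqref{eq:kDistortionIneq} together with \eqref{eq:kMomentGrowth}, with power-of-logarithm weights. I would also point out that the moment bound \eqref{eq:kMomentGrowth} is what the scheme produces, an upper bound, and not a claim that the moments are that large. With this scope fixed, the calculation above is the whole proof.
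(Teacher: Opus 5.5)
Your proposal follows the paper's own argument: the local factor $(2^k-1)/q_j+O_k(q_j^{-2})$, the Euler product giving $(\log q_i)^{2^k-1}/q_i^k$, the $\theta_i^{-(k-1)}$ loss from \eqref{eq:kDistortionIneq}, the block test showing summability iff $\gamma<0$, and minimisation of $(2^k-1)/(k-1)$. Steps~1 and~2 are fine. In Step~3 you prove the monotonicity that the paper only asserts, but your displayed computation of $f(k+1)-f(k)$ is wrong.

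You swapped the cross-multipliers. Over the common denominator $k(k-1)$ one has $f(k+1)=\frac{(k-1)(2^{k+1}-1)}{k(k-1)}$ and $f(k)=\frac{k(2^k-1)}{k(k-1)}$. So the numerator is $(k-1)(2^{k+1}-1)-k(2^k-1)=2^k(k-2)+1$, not $k(2^{k+1}-1)-(k-1)(2^k-1)=2^k(k+1)-1$. A quick check at $k=2$ shows the error: your formula gives $11/2$, while $f(3)-f(2)=7/2-3=1/2$.

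The corrected numerator $2^k(k-2)+1$ is at least $1$ for every $k\ge2$. Hence $f$ is still strictly increasing with minimum $f(2)=3$, and the proposition follows once that line is fixed. You should replace the reduction to $2^k(k+1)>1$ by $2^k(k-2)+1>0$.
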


\begin{remark}
\Cref{prop:higherMomentBarrier} is not a sharpness theorem for covering systems.  It leaves open improvements based on a different distorted measure, a sharper analysis of correlations among fibres, or another probabilistic method.
\end{remark}

\begin{problem}
Determine how close a sufficient weight can be brought to
\[
 1+\frac{(\log N\mathfrak p)^3}{N\mathfrak p},
\]
and whether the auxiliary slowly growing factor in \cref{cor:explicitWeight} can be removed.
\end{problem}

\section*{Acknowledgements and use of tools}
The distortion framework used here is adapted from \cite{BBMST2022}.  The author is responsible for verifying all mathematical statements, citations and final wording before submission.

\end{document}